\documentclass[a4paper,11pt]{article}

\PassOptionsToPackage{quiet}{fontspec}
\usepackage{bm}
\usepackage[english]{babel}
\usepackage{hyperref}
\usepackage{amssymb,amsmath,amsthm}
\usepackage{enumerate}
\usepackage{enumitem}
\usepackage{graphicx, color}
\usepackage{epsfig}
\usepackage{tikz}
\usetikzlibrary{calc,decorations.pathmorphing,decorations.text}
\usepackage{subcaption}
\usepackage{refcount}
\usepackage[hmargin=2.5cm,vmargin=3cm]{geometry}
\usepackage{mathtools, braket}
\usepackage[normalem]{ulem}
\usepackage{authblk}
\usepackage{centernot}
\usepackage{indentfirst}
\usepackage{framed} 
\allowdisplaybreaks[4]
\usepackage[nameinlink]{cleveref}
\usepackage{amsmath}
\usepackage{algorithm}
\usepackage{algorithmic}
\usepackage[numbers,sort&compress]{natbib}
\usepackage{hyperref}
\hypersetup{
	colorlinks=true,
	linkcolor=black,
	citecolor=black
}

\newtheorem{thm}{Theorem}[section]

\newtheorem{lem}[thm]{Lemma}
\newtheorem{corollary}[thm]{Corollary}

\renewcommand\ge\geqslant
\renewcommand\geq\geqslant
\renewcommand\le\leqslant
\renewcommand\leq\leqslant

\usepackage{caption}
\usetikzlibrary{matrix}
\usetikzlibrary{decorations.markings}
\tikzstyle{vertex}=[circle, draw, fill=black!50,
inner sep=0pt, minimum width=4pt]
\tikzset{->-/.style={decoration={
			markings,
			mark=at position .5 with {\arrow{>}}},postaction={decorate}}}

\tikzstyle{bigblue}=[color=blue, very thick, >=stealth]
\tikzstyle{lightblue}=[color=blue, thin, >=stealth]
\tikzstyle{bigred}=[color=red, very thick, >=stealth]
\tikzstyle{lightred}=[color=red, thin, >=stealth]
\tikzstyle{biggreen}=[color=black!30!green, very thick, >=stealth]
\tikzstyle{lightgreen}=[color=black!30!green,  thin, >=stealth]

\newlength{\bibitemsep}
\newlength{\bibparskip}
\let\oldthebibliography\thebibliography
\renewcommand\thebibliography[1]{
	\oldthebibliography{#1}
	\setlength{\parskip}{\bibitemsep}
	\setlength{\itemsep}{\bibparskip}
}

\title{Reconfiguration graphs of $K_{2,3}$-minor-free graphs}
\author{Ruijuan Gu\thanks{Sino-European Institute of Aviation Engineering, Civil Aviation University of China, Tianjin 300300, P.R.China. Email: millet90@163.com.},~ 
Hui Lei\thanks{School of Statistics and Data Science, LPMC and KLMDASR, Nankai University, Tianjin 300071, P.R.
China. Email: hlei@nankai.edu.cn.},~ Zhaoxiang Li\thanks{School of Mathematical Sciences and LPMC, Nankai University, Tianjin 300071, P.R.China. Email: zhaoxiangli@mail.nankai.edu.cn.},~ Yulai Ma\thanks{Center for Combinatorics and LPMC, Nankai University, Tianjin 300071, P.R.China. Email: yulai.ma@upb.de.},~ Susu Wang\thanks{Center for Combinatorics and LPMC, Nankai University, Tianjin 300071, P.R.China. Email: susuwang@mail.nankai.edu.cn.}
 
}

\date{\today}

\begin{document}
\maketitle
\begin{abstract}

The $\ell$-reconfiguration graph of a graph $G$, denoted by $\mathcal{R}_{\ell}(G)$, is the graph whose vertices are the proper $\ell$-colorings of $G$, with an edge between two colorings if they differ in color on exactly one vertex. For any graph $G$ of treewidth at most $2$, Bousquet and Perarnau showed that $\mathcal{R}_\ell(G)$ has linear diameter for $\ell\geq 6$. This result was later extended by Bartier, Bousquet, and Heinrich, who proved that $\mathcal{R}_5(G)$ also has linear diameter. 

In this paper, we show that for each $\ell\geq 5$,  the $\ell$-reconfiguration graphs of $K_{2,3}$-minor-free graphs,  some of which include graphs of treewidth $3$, have linear diameter. As a key step in our proof, we also establish that the $(\ell-1)$-reconfiguration graphs of cactus graphs have linear diameter.

\noindent\textbf{Keywords:} reconfiguration graphs; treewidth; $K_{2,3}$-minor-free graphs; cactus graphs
\end{abstract}

\section{Introduction}
In the study of reconfiguration problems, two main types of questions arise. The first is whether, for a given graph class $\mathcal{G}$, there exists an integer $t$ such that $\mathcal{R}_{\ell}(G) $ is connected for each $G\in \mathcal{G}$ and ${\ell}\geq t$. The second concerns the diameter of $\mathcal{R}_{\ell}(G)$ when it is connected.

A graph $ G$ is {\it $d$-degenerate} if every induced subgraph contains a vertex of degree at most $d$. In $2007$, Cereceda  \cite{Cerecedacon2007} conjectured that for any $d$-degenerate graph $G$ and ${\ell}\geq d + 2$,
the diameter of $\mathcal{R}_{\ell}(G) $ is $O(n^2)$, and confirmed this for ${\ell}\geq 2d+1$.
Dyer et al. \cite{Dyerdeg2006} and Cereceda et al. \cite{Cerecedamix2009} independently proved that $\mathcal{R}_{\ell}(G)$ is connected for any $\ell\geq d+2$. In $2022$, Bousquet and Heinrich \cite{Bousquettrianglefree2022} established that for each $d$-degenerate graph $G$ and $\ell\geq d+2$, the diameter of $\mathcal{R}_{\ell}(G) $ is $O(n^{d+1})$. Regarding the linear bounds for the diameter,  Bousquet and Perarnau \cite{Bousquetsparse2016} showed that for each $d$-degenerate graph $G$ and $\ell\geq  2d+2$, the diameter of $\mathcal{R}_{\ell}(G) $ is at most $(d+1)\cdot n$.

It is well known that planar graphs are 5-degenerate, making Cereceda's conjecture particularly relevant for this class of graphs. Feghali
\cite{Feghalimad2021} proved that for any planar graph $G$ and  $\ell\geq 8$, the diameter of $\mathcal{R}_{\ell}(G) $ is $O(n^2)$. As for the case $\ell=7$, a result of Bousquet and Heinrich  \cite{Bousquettrianglefree2022} showed that the diameter of $\mathcal{R}_7(G) $ is $O(n^6)$. Concerning the linear bounds on the diameter, Dvo\v{r}\'{a}k and Feghali \cite{DvorakThomassen2021} proved that for each planar graph $G$, the diameter of $\mathcal{R}_{10}(G) $ is at most $8n$.

In addition, a special subclass of planar graphs is studied, namely the $K_4$-minor-free graphs. Note that a graph is $K_4$-minor-free if and only if its treewidth is at most $2$.
It was established in \cite{Bousquetsparse2016} that for any $K_4$-minor-free graph $G$ and any $\ell\geq6$, $\mathcal{R}_\ell(G)$ has linear diameter. 
Furthermore, Bartier et al. \cite{Bartiertw22021} showed that the same holds for $\ell=5$, i.e., $\mathcal{R}_5(G)$  has linear diameter as well. We restate these two results in detail below. An $\ell$-coloring $\alpha$ is said to be {\it transformed} to another $\ell$-coloring $\beta$ if there is a path from $\alpha$ to $\beta$ in $\mathcal{R}_\ell(G)$.

\begin{thm}\label{tw2}
Let $G$ be a $K_4$-minor-free graph and $\ell$ be an integer with $\ell\geq 5$. For every pair of $\ell$-colorings $\alpha$ and $\beta$, there exists a transformation from $\alpha$ to $\beta$ by recoloring each vertex at most $t$ times, where $t$ is as follows:
\renewcommand{\labelenumi}{(\arabic{enumi})}
\begin{enumerate}
    \item\label{k23-6} $t=3$ if $\ell\geq 6$; {\em \cite{Bousquetsparse2016}}
    \item\label{k23-5} $t=1386$ if $\ell=5$. {\em\cite{Bartiertw22021}}
\end{enumerate}
\end{thm}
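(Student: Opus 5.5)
This theorem restates two cited results, so the plan below follows the standard arguments. For part (1) I would use that $K_4$-minor-free graphs are $2$-degenerate and, more strongly, have a vertex $v$ of degree at most $2$. I would proceed by induction on $n$. Remove such a vertex $v$ to get $G'=G-v$, and by induction transform $\alpha|_{G'}$ to $\beta|_{G'}$ recoloring each vertex at most $3$ times. Then lift this sequence to $G$. Whenever a neighbour $u$ of $v$ is about to be recolored to the current color of $v$, first recolor $v$ to a color avoiding its at most two neighbours' current colors and the incoming color. This needs $\ell\ge 4$ at each step.

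The difficulty is bounding the recolorings of $v$, since its neighbours may each change color up to $3$ times. To control this I would follow Bousquet--Perarnau. When $v$ must move, choose its new color to avoid not only the current colors of $N(v)$ but also the \emph{next} color each neighbour will take in the sequence. With $\ell\ge 6$ there are at least $6-2-2=2$ admissible colors, and one can even aim for the next-next colors. Together with a final move to $\beta(v)$, this shows $v$ is recolored at most $3$ times. The cleaner route is to prove the stronger statement for $d$-degenerate graphs with $\ell\ge 2d+2$, namely at most $d+1$ recolorings per vertex, and specialise to $d=2$.

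For part (2), with $\ell=5$, degeneracy alone is insufficient and I would follow Bartier--Bousquet--Heinrich. The first step uses the structure of $2$-trees: every $K_4$-minor-free graph is a subgraph of a $2$-tree, and a $2$-tree contains configurations where many degree-$2$ vertices hang on a common edge, or chains of such vertices. The second step shows a reducible configuration exists with linearly many vertices that can be handled simultaneously. The third step contracts or removes this configuration, recursing on the smaller graph with a constant-factor budget. Finally, the removed vertices are reinserted with a local recoloring argument. This argument uses that at most a bounded number of colors are blocked on each edge $xy$, so the vertices attached to $xy$ can be "parked" on safe colors before $x$ and $y$ change.

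The main obstacle is the amortisation in this last step. Each recursion level multiplies the per-vertex count, so one must ensure every vertex is affected only by a constant number of levels, or that the cost is additive rather than multiplicative. I would first try a direct induction on the $2$-tree decomposition with the hypothesis that every vertex is recolored at most $c$ times. Reinsertion would then recolor each reinserted vertex at most a constant number of times per recoloring of its two attachment vertices. This gives a recursion like $c\le k\cdot c_0$ with $c_0$ fixed, and careful bookkeeping yields a constant such as $1386$.
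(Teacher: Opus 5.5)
\textbf{Part (1).} Here you are reproducing the Bousquet--Perarnau argument that the paper cites. Specialising their $d$-degenerate theorem ($\ell\ge 2d+2$, at most $d+1$ moves per vertex) to $d=2$ is fine. Only your concrete rule needs fixing. ``Avoid each neighbour's next colour,'' with an arbitrary admissible choice, can force $v$ at the 1st and 3rd recolouring of one neighbour and at the 2nd recolouring of the other, and then once more to reach $\beta(v)$: four moves. The rule that works, and the one used in Lemma~\ref{Lemma-ell>5}, is global look-ahead. Among the at least $\ell-d-1\ge d+1$ free colours, take one that does not occur in the next $d$ recolourings of $N(v)$. Forced moves are then at least $d+1$ neighbour-recolourings apart, so there are at most $d$ of them among the at most $d(d+1)$ such recolourings, plus the final move.

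\textbf{Part (2)} has a genuine gap. The ``amortisation'' you postpone is the whole content of the theorem, and the induction you propose does not close. Suppose a degree-$2$ vertex $v$ on an edge $xy$ is reinserted. A forced move of $v$ with $5$ colours has only $5-3=2$ options. So the best greedy guarantee is one forced move per two recolourings of $\{x,y\}$, i.e.\ $|S|_v|\le 1+\lceil(|S|_x|+|S|_y|)/2\rceil$, which equals $c+1$ when $x$ and $y$ use the full budget $c$. This is exactly the failure noted in the Remark after Lemma~\ref{LemmaA-3}. Whether you recurse vertex by vertex or by peeling linear-size reducible sets, a loss of $+1$ (or a constant factor) per round accumulates over an unbounded number of rounds, so no constant bound results. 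Writing ``$c\le k\cdot c_0$'' assumes the conclusion.

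There is also a second problem. $\alpha$ and $\beta$ are proper on $G$ but in general not on a $2$-tree supergraph, so you cannot run a $2$-tree induction between them directly.

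The argument the paper relies on (Bartier--Bousquet--Heinrich, whose machinery is restated in the Appendix and mirrored in the proof of Theorem~\ref{K23}(2)) handles both issues in two stages.
\begin{enumerate}
\item Each of $\alpha$ and $\beta$ is separately transformed into \emph{some} $4$-colouring, with at most $h=692$ moves per vertex. To do this, complete $G$ to a chordal graph with $\omega\le 3$ and identify adjacent vertices of equal colour. This is possible for only one colouring at a time, which is why the target is an arbitrary $4$-colouring rather than $\beta$. Then run the best-choice strategy along a perfect elimination ordering. The constant comes from the saved-step bound $|S|_v|\le 1+\lceil (m-r)/2\rceil$ (Lemma~\ref{lem6}) combined with Lemma~\ref{lem9} (recurrence of the pattern $uwvu$, few non-rainbow colour triples). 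Together these produce enough saved steps to stop the $+1$ creep.
\item An independent set $I$ with $G-I$ a forest is parked on colour $5$. The forest is then recoloured between the two $4$-colourings inside $\{1,2,3,4\}$, with at most $2$ moves per vertex (your part (1) with $d=1$). Finally $I$ is restored.
\end{enumerate}
This is where $1386=2\cdot 692+2$ comes from. Your plan contains none of the three ingredients: the detour through $4$-colourings, the independent-set/forest split, or the saved-step amortisation.
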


It is known that a simple graph is outerplanar if and only if it is both $K_4$-minor-free and $K_{2,3}$-minor-free \cite{Chartrandouterplanar1967}. Motivated by this structural characterization, we investigate the diameter of the $\ell$-reconfiguration graphs of $K_{2,3}$-minor-free graphs for each $\ell\geq5$. Note that $K_{2,3}$-minor-free graphs have treewidth at most $3$, and some have treewidth exactly $3$.

\begin{thm}\label{K23}
Let $G$ be a $K_{2,3}$-minor-free graph and $\ell$ be an integer with $\ell\geq 5$. For every pair of $\ell$-colorings $\alpha$ and $\beta$, there exists a transformation from $\alpha$ to $\beta$ by recoloring each vertex at most $t$ times, where $t$ is as follows:
\renewcommand{\labelenumi}{(\arabic{enumi})}
\renewcommand{\labelenumi}{(\arabic{enumi})}
\begin{enumerate}
    \item\label{k23-6} $t=3$ if $\ell\geq 6$;
    \item\label{k23-5} $t=1387$ if $\ell=5$.
\end{enumerate}
\end{thm}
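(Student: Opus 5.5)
We reduce to the $K_4$-minor-free case via structure. The plan is to use the known structure of $K_{2,3}$-minor-free graphs: every 2-connected $K_{2,3}$-minor-free graph is either outerplanar or isomorphic to $K_4$. Since colorings of different blocks interact only at cut vertices, we work on the block-cut tree. The graph $H$ obtained from $G$ by deleting, in each $K_4$-block, a suitable set of edges (say, keep a triangle-free spanning structure, or contract the block appropriately) is $K_4$-minor-free. We then want to lift a transformation for $H$ (Theorem~\ref{tw2}) to one for $G$, paying at most one extra recoloring per vertex. This matches the target $t=1387=1386+1$ when $\ell=5$. For $\ell\ge 6$ the target stays $t=3$, which suggests the bound there should follow more directly, e.g.\ by the Bousquet--Perarnau degeneracy-style argument adapted to blocks.

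Concretely, I would proceed as follows. First, handle the $K_4$-blocks by choosing, in each $K_4$-block $B$, one vertex $v_B$ that is not a cut vertex if possible; if all four vertices are cut vertices, pick any one of them. Let $G'$ be $G$ minus the non-cut ``private'' vertices of $K_4$-blocks. Each $K_4$-block $B$ then becomes a triangle, or a $K_4$ all of whose vertices are cut vertices. In the latter case, subdivision and minor arguments show the block-cut tree still imposes structure; I would treat these by peeling leaves of the block-cut tree.

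Second, there is the key lemma, whose analogue the abstract hints at via cactus graphs. Reserve one color for each removed vertex. The approach is to recolor the removed vertices $v_B$ once at the start, to a color avoiding their $3$ neighbours (possible since $\ell\ge5>3$). We then run the transformation on the $K_4$-minor-free remainder, treating $v_B$ as a fixed list constraint. Finally we recolor $v_B$ to $\beta(v_B)$ once at the end. The difficulty is that $v_B$ blocks a color on its neighbours during the middle phase. To avoid this, I would instead apply Theorem~\ref{tw2} to $G-\{v_B\}$ with lists of size $\ell-1\ge4$ on the neighbours of $v_B$. This is exactly where a result with $\ell-1$ colors is needed, and the abstract's cactus lemma for $(\ell-1)$-reconfiguration suggests this route: the neighbours of the deleted vertices form triangles, and the structure forced there is cactus-like.

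The main obstacle is the interaction when a $K_4$-block has all vertices as cut vertices, or when many $K_4$ blocks share vertices, so that no vertex can be cleanly set aside. I would first try induction on the number of blocks, gluing transformations at a cut vertex. The argument would synchronise the recoloring sequences of two sides by permuting colors on one side, which costs no extra recolorings of the cut vertex since colorings of a block can be renamed. That gluing step, done without increasing the per-vertex count beyond $t$, is where I expect the real work to lie.
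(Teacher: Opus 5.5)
\textbf{There is a genuine gap: the proposal is a plan, not a proof, and it breaks at the step you defer as ``the real work''.}

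\textbf{Gluing.} Gluing at a cut vertex $u$ by permuting colours on one side cannot work. Apply a permutation $\pi$ to a transformation of $G_2$ from $\alpha|_{G_2}$ to $\beta|_{G_2}$. The result runs from $\pi\circ\alpha|_{G_2}$ to $\pi\circ\beta|_{G_2}$, so the endpoints are wrong unless $\pi$ fixes every colour of $\alpha|_{G_2}$. Moreover, renaming never changes the length or the pattern of the colour sequence of $u$, and the two sides produce those sequences independently.

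What you need instead is an extension lemma for a single block $B\ni u$, in which the sequence $c_0(u),\dots,c_q(u)$ is \emph{prescribed} by the other side. For $\ell\ge 6$ this is Lemma~\ref{Lemma-ell>5}, with $q\le 3$. $K_4$-blocks are handled by an explicit case analysis. In an outerplanar block one peels a vertex of degree at most $2$, which is recoloured at most $\lceil 6/(\ell-3)\rceil+1\le 3$ times. Part (1) then follows by induction on blocks.

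Appealing to Bousquet--Perarnau does not replace this. $K_{2,3}$-minor-free graphs are only $3$-degenerate, so their theorem needs $\ell\ge 8$ and gives diameter $4n$. The naive degeneracy count also does not give $3$ recolourings for the back-degree-$3$ vertex of a $K_4$-block when $\ell=6$.

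\textbf{The case $\ell=5$.} Neither reduction you suggest goes through.

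\begin{itemize}
\item Deleting edges inside $K_4$-blocks and invoking Theorem~\ref{tw2} yields intermediate colourings that may be improper in $G$. There is no mechanism to repair them at one extra recolouring per vertex. The match $1387=1386+1$ is a coincidence; the paper's bound is $2\cdot 692+3$.
\item Freezing one vertex $v_B$ per $K_4$-block at its own colour leaves a list-reconfiguration problem on a graph that is merely $K_4$-minor-free, with lists of size $4$ at the neighbours of the $v_B$. Theorem~\ref{tw2} says nothing about lists. It is known that with four colours, $3$-colourable chordal graphs (treewidth at most $2$) can have quadratic reconfiguration diameter.
\end{itemize}

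The missing ideas are the following.
\begin{enumerate}
\item[(i)] First move both $\alpha$ and $\beta$ to colourings using only $\{1,2,3,4\}$, recolouring each vertex at most $692$ times (Corollary~\ref{cor: 5 to 4}). This is proved by chordal completion of outerplanar blocks and the best-choice machinery of Bartier--Bousquet--Heinrich, run with a prescribed sequence at the cut vertex. The gluing needs the bookkeeping $q\le h-2$ for outerplanar parts and $h-3$ for $K_4$-vertices (Lemma~\ref{Lemma-chordal-5to5}).
\item[(ii)] Choose an independent set $I$ that contains exactly one vertex of each $K_4$-block \emph{and} makes every outerplanar block a forest (Lemma~\ref{cactus-I}). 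Removing vertices only from $K_4$-blocks is not enough. With this choice $G-I$ is a union of cacti, and cut vertices cause no trouble since $I$ only needs to be independent.
\item[(iii)] Because of (i), all of $I$ can be put on the common colour $5$. This removes one colour uniformly from $G-I$, and Theorem~\ref{cactus} then applies with four colours.
\end{enumerate}
Vertices outside $I$ are recoloured at most $692+3+692=1387$ times, and vertices in $I$ at most $2\cdot 692+2$ times.
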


As a key step towards proving Theorem~\ref{K23}, we obtain the following result.

\begin{thm}\label{cactus}
Let $G$ be a cactus graph and $\ell$ be an integer with $\ell\geq 4$. For every pair of $\ell$-colorings $\alpha$ and $\beta$, there is a transformation from $\alpha$ to $\beta$ by recoloring each vertex at most $3$ times.  
\end{thm}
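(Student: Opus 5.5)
We prove Theorem~\ref{cactus} by induction on the number of vertices, and we prove a slightly stronger statement: for every cactus $G$ and $\ell\ge 4$, any $\ell$-coloring $\alpha$ can be transformed to any $\ell$-coloring $\beta$ so that each vertex is recolored at most $3$ times, and \emph{the last recoloring of each vertex is to its target color $\beta(v)$}. The scheme follows the one Bousquet and Perarnau use for $d$-degenerate graphs with $2d+2$ colors, adapted to the structure of cacti.

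\textbf{Key structural fact.} A cactus (with at least two vertices) has a vertex $v$ of degree at most $2$ of one of two kinds:
\begin{itemize}
\item[(a)] $v$ is a leaf;
\item[(b)] $v$ lies on an end-block cycle $C$, where all vertices of $C$ except one cut vertex have degree $2$ in $G$.
\end{itemize}
This follows from looking at a leaf block of the block-cut tree. It suffices to remove a single degree-$\le 2$ vertex $v$ with neighbors $u,w$, as long as the induced transformation on $G-v$ can be "lifted" back to $G$.

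\textbf{Inductive step.}
\begin{enumerate}
\item Apply induction to $G'=G-v$, obtaining a sequence from $\alpha|_{G'}$ to $\beta|_{G'}$ in which each of $u,w$ is recolored at most $3$ times.
\item Replay this sequence in $G$. Whenever a neighbor $x\in\{u,w\}$ is about to take the current color of $v$, first recolor $v$ to a color avoiding its neighbors' current colors and the upcoming color. This needs $\ell\ge 4$, since we must avoid at most three colors: $c(u)$, $c(w)$, and the new color.
\item At the end, recolor $v$ to $\beta(v)$, which is proper because $\beta$ is.
\end{enumerate}

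\textbf{Main obstacle.} The naive count lets $v$ be recolored once for each of up to $6$ neighbor recolorings, plus once at the end, far more than $3$. The fix is to choose the temporary color smartly. When forced to move, $v$ picks a color not equal to any \emph{future} color its neighbors will take before the end, or directly $\beta(v)$ when possible. Each neighbor passes through at most $3$ colors, one of them final, so a careless bound fails. I would therefore strengthen the induction hypothesis to a normal form: each vertex $x$ is recolored at most twice, first to some intermediate color $\gamma(x)$, and then once to $\beta(x)$ in a final phase, with the second recoloring possibly omitted.

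Under this hypothesis, $v$'s neighbors together take at most $4$ distinct future colors. The task is to show that $v$ needs at most one intermediate move before its final move, for example by moving $v$ to $\beta(v)$ at the first moment neither neighbor currently holds $\beta(v)$.

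The case analysis on when $u$ or $w$ holds or will take $\beta(v)$ is where I expect the real difficulty. This is likely where the cactus structure is needed: removing a whole path of degree-$2$ vertices of an end cycle at once, and recoloring it last, with the cycle's vertices ordered so that each has at most one already-final neighbor. This gives the slack needed to reach the bound of $3$ recolorings with $4$ colors.
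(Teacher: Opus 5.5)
\textbf{There is a genuine gap.} Your proposal is not yet a proof. The one step with real content, bounding how often the removed vertex $v$ is recolored, is left open, and the mechanism you suggest for it cannot work.

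Replaying a fixed sequence of $G-v$ and moving $v$ only when needed, however "smartly", fails even under your normal form. Take colors $\{1,2,3,4\}$ with $\alpha(u)=1$, $\alpha(w)=2$, $\alpha(v)=3$, $\beta(v)=4$. Suppose the inductive sequence on $G-v$ performs $u\to 3$, $w\to 4$, $u\to 1$, $w\to 2$. Each neighbor is recolored twice, intermediate then final, and the sequence is proper even if $uw$ is an edge. Between consecutive events $v$ has exactly two admissible colors, and one of them is the color a neighbor is about to take. So $v$ is forced through $3\to4\to1\to2\to3\to4$: five recolorings with no choice at any point.

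Hence no local rule for $v$ can rescue the count. The induction hypothesis must give you control over how the neighbors' recolorings are interleaved, and your normal form does not. That normal form is also strictly stronger than the theorem (at most two recolorings instead of three), and you give no argument that it is preserved. Likewise, "recolor the end path last" is impossible: the path vertices adjacent to the cut vertex must leave a color before the cut vertex takes it.

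\textbf{The missing idea, as in the paper.} Recast "each vertex recolored at most once from $\gamma_1$ to $\gamma_2$" as acyclicity of the digraph $D_{\gamma_1\gamma_2}$, with an arc $xy$ whenever $xy\in E(G)$ and $\gamma_1(y)=\gamma_2(x)$; a reverse topological order then gives the recoloring order. The theorem becomes: there exist intermediate colorings $c_1,c_2$ with $D_{\alpha c_1}$, $D_{c_1c_2}$, $D_{c_2\beta}$ all acyclic.

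Induct on the number of blocks, deleting an end block $B$ except its cut vertex $u$. The three-step transformation on $G'$ prescribes only the four colors $c'_0(u),\dots,c'_3(u)$. You then choose colorings of $B$ at the two intermediate phase boundaries, matching these colors on $u$, so that each of the three phase digraphs on $B$ is acyclic. This needs a case analysis for triangles and for longer cycles, using $\ell\ge 4$.

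Because $B$ and $G'$ share only $u$, no dicycle can use vertices of both. So each phase's combined digraph is acyclic, and the order of recolorings within a phase is recomputed by topological sorting rather than inherited from $G'$. That freedom to re-interleave is exactly what your replay scheme lacks.
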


%\lzx{algorithm, Lemma~\ref{lem: 5 to 4} add edges}

The organization of this paper is as follows.  The next section presents some notation and terminology, as well as structural properties of $K_{2,3}$-minor-free graphs. Sections \ref{Cactus} and \ref{last} are devoted to the proofs of Theorems \ref{cactus} and \ref{K23}, respectively.
%An open problem is proposed in a subsequent section.

\section{Preliminaries}\label{preli}
All graphs in this paper are finite and simple. For undefined notation and terminology, we refer readers to \cite {Bondygraph2008}.  
 Let $G=(V(G), E(G))$ be a graph and $X\subseteq V(G)$.
%two vertices $u$ and $v$ are {\it adjacent} if $uv\in E(G)$. 
%For any vertex set $X$ of $G$, denote the number of vertices of $X$ by $|X|$. 
 The subgraph of $G$ {\it induced} by $X$ is denoted by $G[X]$. Particularly, the set $X$ is called {\it independent} if $G[X]$ contains no edges. For convenience, we simply write $G-X$ for $G[V(G)\setminus X]$. If $X=\{x\}$, then we write $G-x$ for $G-\{x\}$.  For any $x\in V(G)$, let $d_G(x)$ denote the degree of $x$. Let  $N_G(x)$ be the neighborhood of $x$ in $G$ and $N_{G}[x]:=N_{G}(x)\cup\{x\}$ be the closed neighborhood of $x$.  Sometimes we omit the subscript $G$ if there is no conflict occurs, such as using $d(x)$, $N(x)$ and  $N[x]$ instead of $d_G(x)$,  $N_G(x)$ and $N_{G}[x]$.
%we simply write $d(x)$, $N(x)$ and $N[x]$ instead of $d_G(x)$, $N_G(x)$ and $N_{G}[x]$.  
%A graph $G$ has a {\it minor} $H$ if $H$ can be obtained from a subgraph of $G$ by contracting edges, deleting vertices or deleting edges. A graph $G$ is called {\it $H$-minor-free} if $G$ does not contain $H$ as a minor. 
A {\it clique} of $G$ is an induced subgraph of $G$ that is complete, and a clique on $k$ vertices is denoted by $K_k$. The {\it clique number} of $G$, denoted by $\omega(G)$, is the maximum order of cliques in $G$. A vertex $x$ is called a {\it cut vertex} if $G-x$ has more components than $G$. A {\it block} is a connected graph with no cut vertices. A {\it block of the graph $G$} is a maximal subgraph with this property, and an {\it end block} is one that contains exactly one  cut 
vertex of $G$. A planar graph is called {\it outerplanar} if it can be embedded in the plane such that all its 
vertices lie on the boundary of a single face, which we hereafter assume to be the exterior face.  A {\it cactus graph} is a connected planar graph in which every block is either a cycle or an edge. The ordering $v_1,v_2,\ldots,v_n$ of the vertices of $G$ is called a {\it perfect elimination ordering} of $G$ if for each $i$, $v_i$ and its neighbors in $\{v_{i+1},v_{i+2},\ldots,v_n\}$ form a clique. A chordal graph is a graph without induced cycle of length at least four. Equivalently, a graph is chordal if and only if it has a perfect elimination ordering.
% We denote by $C_n:=x_1x_2\ldots x_nx_1$ the cycle on $n$ vertices, and by  $P_n:=x_1 x_2 \ldots x_n$ the path on $n$ vertices. A cycle of length $\ell$ is called an {\it $\ell$-cycle}, and a cycle of length at least $\ell$ is referred to as an {\it $\ell^+$-cycle}. 
% \yl{A cycle of length $k$ ({\it at least $k$ }, respectively) is called a {\it $k$-cycle} ({\it $k^+$-cycle}, respectively) and denoted by $C_{k}$ ({\it $C_{k^+}$}, respectively). Similarly, a path on $k$ vertices is denoted by $P_{k}$.}

A {\it proper 
$\ell$-coloring} of $G$ is a mapping $c: V(G)\rightarrow \{1,2,\ldots,\ell\}$ such that any two adjacent vertices of $G$ receive different colors.
%for each $i\in \{1,2,\ldots,k\}$, $c^{-1}(i)$ is an independent set, 
 All colorings considered in this paper are proper, so we use coloring instead of proper coloring in the remaining context. The {\it $\ell$-reconfiguration graph}, denoted by $\mathcal{R}_{\ell}(G)$, is the graph whose vertices are the $\ell$-colorings of $G$ and two colorings are joined by an edge if they differ in color on exactly one vertex. Let $\alpha$ be a coloring of $G$ and $G_1$ be a subgraph of $G$, we use $\alpha(v)$ to denote the color of a vertex $v$ and $\alpha|_{G_1}$the restriction of coloring $\alpha$ on $G_1$, respectively.
% When $G_1$ is a vertex $u$, we simply write $\alpha|_{u}$ to denote the coloring $\alpha$ on $u$.
%delete.If a graph $G$ can be recolored from an $\ell$-coloring $\alpha$ to a $k$-coloring $\beta$ by recoloring each vertex $h$ times, where $k\leq \ell$, then for each vertex $u$, we denote colors of $u$ in different recoloring steps as follows: let $c_j(u)$ be the color of $u$ at step $j$ for each $1\leq j\leq h$, where $c_h(u)=\beta(u)$. For convenience, let $c_0(u)=\alpha(u)$ be the color of $u$ at step $0$. Note that $c_j(u)$ may be equal to $c_{j+1}(u)$ for $0\leq j\leq h-1$.

For a digraph $D$, let $uv$  denote an arc with head $v$ and tail $u$. Precisely, an arc $uv$ is an {\it out-arc} of $u$ and an {\it in-arc}  of $v$. For a vertex $v$, denote the  {\it in-neighborhood} and {\it out-neighborhood} of $v$ by $N^-(v)$ and $N^+(v)$, where $N^-(v)=\{u\in V(D)|uv \text{ is an arc}\}$ and $ N^+(v)=\{u\in V(D)|vu \text{ is an arc}\}$. Moreover, the {\it closed in-neighborhood} and {\it closed out-neighborhood} of $v$ are denoted by $N^-[v]$ and $N^+[v]$, where $N^-[v]=\{v\}\cup N^-(v)$ and $N^+[v]=\{v\}\cup N^+(v)$. 
Let $d^+(v)=|N^+(v)|$ and $d^-(v)=|N^-(v)|$. A {\it dicycle} $C$ is an alternating sequence $C=u_0u_1\ldots u_ku_0$ such that $u_iu_{i+1}$ is an arc for every $i\in \{0,1,\ldots,k-1\}$, $u_ku_0$ is an arc and  all vertices are different. 
%If $u_0=u_k$, then we call it a {\it dicycle}, and denote by $C_k:= u_0u_1\ldots u_ku_0$.} \lzx{no dipath used}
Especially, call $uv$ a {\it digon} if both $uv$ and $vu$ are arcs. We consider a digon as a $2$-dicycle.
A {\it complete dicycle} is a dicycle with a digon between every two adjacent vertices in the dicycle.

In the next, we shall introduce some  structural characterizations of $K_{2,3}$-minor-free graphs, which are useful in the subsequent sections.

%It is observed that every outerplanar graph $G$ contains an independent set $I$ such that $G-I$ is a forest. We give a stronger result as follows. 

\begin{thm}[\cite{Yuk232012}]\label{thm: k23 of Yuk}
A graph $G$ is $K_{2,3}$-minor-free if and only if each block of $G$ is either a $K_4$ or an outerplanar graph. 
\end{thm}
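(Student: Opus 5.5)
The statement to prove is the characterization of Theorem~\ref{thm: k23 of Yuk}: $G$ is $K_{2,3}$-minor-free if and only if every block of $G$ is a $K_4$ or an outerplanar graph. The plan has three steps. First, reduce to $2$-connected graphs. Second, prove the easy direction. Third, prove the converse by a case analysis in the $2$-connected, non-outerplanar setting.

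\textbf{Reduction to blocks.} Since $K_{2,3}$ is $2$-connected, any $K_{2,3}$ minor of $G$ lies inside a single block of $G$. To see this, take a minor model with branch sets $B_1,\dots,B_5$. The union of the branch sets and the connecting edges is a $2$-connected-type structure. If this union met two blocks, we could contract along the block tree at a cut vertex $v$. The model would then split at $v$: every branch set either avoids one side of $v$ or contains $v$. This would show that $K_{2,3}$ minus at most one vertex is disconnected, which is false. Hence $G$ is $K_{2,3}$-minor-free if and only if each block is, and it suffices to treat $2$-connected $G$ (single edges are trivially outerplanar).

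\textbf{Sufficiency.} $K_4$ has only $4$ vertices, so it has no $K_{2,3}$ minor. By the characterization of \cite{Chartrandouterplanar1967} quoted in the introduction, outerplanar graphs are $K_{2,3}$-minor-free.

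\textbf{Necessity.} Let $G$ be $2$-connected, $K_{2,3}$-minor-free and not outerplanar. By \cite{Chartrandouterplanar1967}, $G$ has a $K_4$ minor. Since $K_4$ is cubic, $G$ contains a subdivision $H$ of $K_4$, with branch vertices $a,b,c,d$ and six subdivided paths.

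\begin{enumerate}
\item Show that no path of $H$ is subdivided. Suppose the $a$--$b$ path contains an internal vertex $x$. Contract the $c$--$d$ path into a single vertex $w$. The vertices $a$ and $b$ are each joined to $x$, to $w$, and to a third vertex. For that third vertex, use the route through the $c$-side versus the $d$-side paths, arranged so that the three paths are internally disjoint. This yields a $K_{2,3}$ with parts $\{a,b\}$ and $\{x,w,\cdot\}$. This choice of paths is the delicate step; I expect it to work because the $K_4$ subdivision has two disjoint $a$--$b$ routes besides the direct one, namely $a\,c\,b$ and $a\,d\,b$, plus the direct path through $x$. With parts $\{a,b\}$ and $\{x,c,d\}$, the three $a$--$b$ routes are internally disjoint once the $c$--$d$ path is deleted. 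Hence a subdivided edge immediately gives a $K_{2,3}$, and $H$ is an actual $K_4$ on $\{a,b,c,d\}$.
\item Show that $V(G)=\{a,b,c,d\}$. Suppose there is a vertex $y\notin\{a,b,c,d\}$. By $2$-connectivity and Menger's theorem, there are two paths from $y$ to distinct vertices of the $K_4$, say $a$ and $b$, that are disjoint except at $y$. Together with the paths $a\,c\,b$ and $a\,d\,b$, they give three internally disjoint $a$--$b$ paths through $y$, $c$ and $d$. This is a $K_{2,3}$ minor, a contradiction.
\end{enumerate}

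It follows that $G=K_4$.

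The main obstacle is the first step: checking that a subdivided edge really yields three internally disjoint $a$--$b$ paths. The argument above, using the paths through $x$, $c$ and $d$, is the approach I would try first, and it appears to go through directly.
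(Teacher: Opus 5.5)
The paper gives no proof of this statement; it is quoted from \cite{Yuk232012}, so you are supplying a self-contained argument rather than retracing the paper's. Your argument is correct.

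After reducing to $2$-connected graphs, the characterization of \cite{Chartrandouterplanar1967} (which the paper also uses) gives a $K_4$ minor. Since $K_4$ has maximum degree $3$, this yields a $K_4$ subdivision. Then a subdivision vertex $x$ on the $a$--$b$ path, or an extra vertex $y$ attached by a fan to two branch vertices, produces three internally disjoint $a$--$b$ paths of length at least $2$: through $x$ or $y$, through $c$, and through $d$. These form a subdivision of $K_{2,3}$.

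Compared with the citation, your route makes the structural input of Section~\ref{preli} depend only on the outerplanarity characterization plus standard tools: Menger's fan lemma, and the fact that minors of $2$-connected graphs live in blocks. The citation only buys brevity.

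Three places need tightening.

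\textbf{Block reduction.} Read literally (``the union of the branch sets lies in one block''), your claim is false: a branch set containing a cut vertex $v$ may spill into the other side of $v$. What is true is that the model can be trimmed. The $2$-connectivity of $K_{2,3}$ forces all branch sets avoiding $v$ into one side $A$ of $v$. The branch set $X\ni v$ may then be replaced by $X\cap A$. This set is still connected, since a path in $G[X]$ between two vertices of $A$ cannot leave $A$ without passing through $v$ twice. It is also still adjacent to every branch set it was adjacent to, because there are no edges between $A\setminus\{v\}$ and the rest of $G-v$. Iterate over the cut vertices.

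\textbf{False start in step 1.} Delete the sentence about contracting the $c$--$d$ path to a vertex $w$. The parts $\{a,b\}$ and $\{x,c,d\}$, with the $c$--$d$ path simply unused, are all you need.

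\textbf{Fan in step 2.} Take the fan so that each of the two paths meets $\{a,b,c,d\}$ only at its endpoint. Otherwise the route through $y$ need not be internally disjoint from $a\,c\,b$ and $a\,d\,b$.
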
 

\begin{lem}\label{ind}
For any outerplanar graph $G$ and a vertex $u\in V(G)$, there is an independent set $I$ such that $u\in I$ and $G-I$ is a forest.
\end{lem}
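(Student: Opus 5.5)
We argue by induction on $|V(G)|$, building $I$ vertex by vertex along a degeneracy-type ordering of the outerplanar graph. Two reductions come first. It suffices to treat connected $G$, since we can take the union of sets from the different components, choosing an arbitrary vertex in the components not containing $u$. We may also assume $G$ is $2$-connected, by gluing along cut vertices. Indeed, suppose $G=G_1\cup G_2$ with $V(G_1)\cap V(G_2)=\{c\}$ and $u\in V(G_1)$. Take $I_1$ for $(G_1,u)$. If $c\in I_1$, take $I_2$ for $(G_2,c)$. If $c\notin I_1$, take $I_2$ for $(G_2,w)$, where $w$ is any vertex of $G_2$, and remove $c$ from $I_2$ if it is there; deleting a vertex from $I$ only adds a vertex back to the forest part and could create cycles, so this case needs care. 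The cleaner route is to strengthen the statement: \emph{for any vertex $u$, there is an independent $I\ni u$ such that $G-I$ is a forest, and also there is an independent $I\not\ni u$ with $G-I$ a forest.} Gluing two forests at the single vertex $c$ yields a forest, and the union $I_1\cup I_2$ is independent because $I_1$ and $I_2$ agree on $c$ and $G_1$, $G_2$ share no edges. So both versions propagate through block decompositions.

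For a $2$-connected outerplanar graph $G$, the boundary of the outer face is a Hamiltonian cycle $C=v_1v_2\cdots v_nv_1$ and all other edges are non-crossing chords. Let $u=v_1$. We induct on the number of chords. If there are none, $G$ is a cycle. We take $I=\{u\}$ for the first version, and $I=\{v_2\}$ with $n\ge 3$ for the second. Otherwise we pick a chord $xy$ and split $G$ into two outerplanar $2$-connected pieces $G_1$ and $G_2$ sharing exactly the edge $xy$. Suppose we have solutions in $G_1$ and $G_2$ that agree on $\{x,y\}$, meaning both contain $x$, or both contain $y$, or neither contains $x$ nor $y$. Then the union is independent, and $G-I$ is obtained by gluing two forests along at most the edge $xy$ or a single vertex, so it is a forest. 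This is because a cycle in the union would have to pass through both $x$ and $y$ using paths in each side, and both sides' forests contain the edge $xy$ when $x,y\notin I$, so the only way to close a cycle through the two sides is via $x$ and $y$, which are joined by a single edge. We therefore prove the stronger statement that for any edge $xy$ on the outer cycle, and any prescribed state of $\{x,y\}$ among ``$x\in I$'', ``$y\in I$'', and ``neither'', a valid $I$ exists.

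The main obstacle is the ``neither'' case combined with keeping control of the vertex $u$. Forcing $x,y\notin I$ on a triangle $xyz$ is fine with $I=\{z\}$, but the constraints must be compatible with the prescribed status of $u$ as well. The alternative I would try first is more direct. Take a vertex $v$ of degree at most $2$ with $v\neq u$; a $2$-connected outerplanar graph with $n\ge 3$ has at least two such vertices, so one exists. Delete $v$, apply induction to get $I'$ for $G-v$ containing $u$, and then put $v$ into $I$ if neither of its neighbours is in $I'$. Otherwise $v$ has degree at most $1$ into the remaining forest and joins it as a leaf, so no cycle is created. Since $v\ne u$, the condition $u\in I$ is preserved. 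This simple degeneracy argument handles everything: it requires no $2$-connectivity beyond the existence of two vertices of degree at most $2$ in every outerplanar graph with at least two vertices, which is a standard fact. The base case is a single vertex, where $I=\{u\}$.
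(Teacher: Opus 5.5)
Your final paragraph is correct and is essentially the paper's proof: induct on $|V(G)|$, delete a vertex $v\neq u$ of degree at most $2$ (one exists because every outerplanar graph on at least two vertices has two such vertices), and put $v$ into $I$ exactly when $N(v)\cap I'=\emptyset$; otherwise $v$ joins the forest as a leaf or isolated vertex. The earlier material (cut-vertex gluing, the strengthened two-version statement, the chord splitting) is left unfinished and is not needed, so drop it and keep only the degeneracy argument.
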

\begin{proof}
We proceed by induction on $|V(G)|$.  It is trivial when $|V(G)|\leq 3$. Assume that Lemma~\ref{ind} holds for all outerplanar graphs with at most 
$k$ vertices. Now  consider the case $|V(G)|= k+1$, and fix a vertex  $u\in V(G)$. Since $G$ is an outerplanar graph, it contains at least two vertices of degree at most $2$. Let $v$ be one such vertex, distinct from 
$u$. Since $|V(G-v)|=k$, by the induction hypothesis, there is an independent set $I'\subseteq V(G-v)$ such that $u\in I'$ and $(G-v)-I'$ is a forest. If $N(v)\cap I'= \emptyset$, then let $I=I'\cup \{v\}$ and otherwise let $I=I'$. It can be verified that the independent set $I$ satisfies all the required properties, thus completing the proof.
\end{proof}

\begin{lem}\label{cactus-I}
Let $G$ be a $K_{2,3}$-minor-free graph. There exists an independent set $I$ such that 
$(1)$ $B_1-I$ is a forest for any block $B_1$ of $G$ except those isomorphic to $K_4$, and 
$(2)$ $|V(B_2)\cap I|=1$ for any block $B_2\cong K_4$ of $G$. 

%In other words, $G-I$ is a union of cactus graphs without $(\geq 4)$-cycle.
\end{lem}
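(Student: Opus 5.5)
We build $I$ block by block along the block-cut tree of $G$. We may assume $G$ is connected, since otherwise we treat each component separately and take the union of the resulting sets. By Theorem~\ref{thm: k23 of Yuk}, every block of $G$ is either a $K_4$ or outerplanar. We root the block-cut tree at an arbitrary block $B_0$ and process the blocks in breadth-first order from the root. Each non-root block $B$ has a unique parent cut vertex $c_B$, and when $B$ is reached, the only vertex of $B$ whose membership in $I$ has already been decided is $c_B$.

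We first choose $I_0=I\cap V(B_0)$. If $B_0\cong K_4$, let $I_0$ be any single vertex. Otherwise $B_0$ is outerplanar, and we take $I_0$ from Lemma~\ref{ind} with any chosen vertex.

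Now suppose we reach a block $B$ with parent cut vertex $c=c_B$. There are two cases, according to whether $c$ is already in $I$.
\begin{enumerate}
\item[(a)] \emph{$c\in I$.} If $B\cong K_4$, set $I\cap V(B)=\{c\}$. If $B$ is outerplanar, apply Lemma~\ref{ind} to $B$ with $u=c$ and add the resulting set.
\item[(b)] \emph{$c\notin I$.} If $B\cong K_4$, add one vertex of $B-c$ to $I$. If $B$ is outerplanar, we need an independent set $J\subseteq V(B)\setminus\{c\}$ with $B-J$ a forest. To get it, pick a neighbor $w$ of $c$ in $B$; one exists since $B$ is a block with at least two vertices. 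Apply Lemma~\ref{ind} to $B$ with $u=w$. The resulting set contains $w$, so by independence it cannot contain $c$.
\end{enumerate}
If $B$ is a single edge $cw$, case (b) may add $w$ or not; either choice is fine. In every case the set we add is independent within $B$ and agrees with the already fixed status of $c$.

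It remains to check that the final $I$ has the required properties.
\begin{itemize}
\item[] \emph{Independence.} Every edge of $G$ lies in exactly one block. Within each block, $I\cap V(B)$ is exactly the independent set chosen there, because the only vertex shared with earlier-processed blocks is $c_B$, whose status we respected.
\item[] \emph{Property (1).} For each non-$K_4$ block $B_1$, the set $I\cap V(B_1)$ is the set produced by Lemma~\ref{ind} (possibly adjusted as in case (b)), so $B_1-I$ is a forest.
\item[] \emph{Property (2).} For each $K_4$ block, exactly one of its vertices lies in $I$.
\end{itemize}

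The main subtlety is case (b): we must avoid $c$ in an outerplanar block while still getting a forest complement, and Lemma~\ref{ind} as stated only forces a chosen vertex into $I$. The trick of forcing a neighbor of $c$ into $I$ handles this, and independence then keeps $c$ out automatically.
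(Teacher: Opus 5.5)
Your proof is correct and uses the same argument as the paper, which also handles a block $B$ attached at a cut vertex $v$ by applying Lemma~\ref{ind} with $u=v$ when $v\in I$ and with $u$ a neighbor of $v$ in $B$ otherwise, and by adding one vertex of $B-v$ to $I$ for a $K_4$ block when $v\notin I$. The only difference is framing: the paper takes a minimal counterexample and peels off an end block by induction, while you build $I$ top-down along the block-cut tree.
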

\begin{proof}
Suppose, for contradiction,  that $G$ is a minimum counterexample with respect to the number of vertices. By the choice of $G$, $G$ is connected. By Theorem~\ref{thm: k23 of Yuk} and Lemma~\ref{ind}, %every $2$-connected $K_{2,3}$-minor-free graph is outerplanar or $K_4$, hence  
$G$ is neither an outerplanar graph nor isomorphic to $K_4$. This implies that $G$ has at least two blocks.
Let $B$ be an end block of $G$ and $v\in V(B)$ be a cut vertex of $G$. Define $G_1:=G-(V(B)\setminus\{v\})$. The minimality of $G$ implies that $G_1$ admits an independent set $I_1$ satisfying conditions (1) and (2).

By Theorem~\ref{thm: k23 of Yuk}, $B$ is isomorphic to $K_4$ or an outerplanar graph. We first consider the case  $B\cong K_4$. If $v\in I_1$, then let $I=I_1$; if $v\notin I_1$, then let $I=I_1\cup \{u\}$, where $u\in V(B-v)$. In either case, the independent set $I$ of $G$ satisfies  conditions (1) and (2), a contradiction.
Now consider the case that $B$ is outerplanar. Let $u\in V(B)$ be a vertex such that $u=v$ if $v\in I_1$ and $u\in N_B(v)$ if $v\notin I_1$. According to Lemma~\ref{ind}, there exists an independent set $I_2\subseteq B$ such that $u\in I_2$ and $B-I_2$ is a forest. It follows that the independent set $I_1\cup I_2$ of $G$ satisfies conditions (1) and (2), a contradiction. 
% such that for any vertex $u$, $u\in I_2$ and $B-I_2$ is a forest, and satisfies the following: if $v\in I_1$, then let $v\in I_2$; if $v\notin I_1$, then let $u\in I_2$ where $u\in N_B(v)$.
% Let $I=I_1\cup I_2$, and the independent set $I$ satisfying conditions (1) and (2), a contradiction. 
%If $v\notin I_1$, let $u\in I_2$ where $u\in N_B(v)$, since then $v\notin I_2$, let $I=I_1\cup I_2$, and the independent $I$ is what we want.
\end{proof}

% For a $K_{2,3}$-minor-free graph $G$, let $\alpha$  and $\beta$ be two $\ell$-colorings of $G$ where $\ell\geq 5$, if $\alpha$ can be transformed into an $(\ell-1)$-coloring $\alpha'$ by recoloring each vertex at most $h$ times, with a similar argument, we can transform $\beta$ into an $(\ell-1)$-coloring $\beta'$ by recoloring each vertex at most $h$ times. By Lemma~\ref{cactus-I}, the graph $G-I$ is a union of cactus graphs where $I$ is an independent set. Then we recolor $I$ to the color $\ell$ under $\alpha'$ and $\beta'$, by Theorem~\ref{cactus}, there is a transformation of $G-I$ from any two $(\ell-1)$-colorings by recoloring each vertex at most $3$ times, then we can get that there is a transformation of $G$ from $\alpha$ to $\beta$ by recoloring each vertex at most $2h+3$ times. Therefore, next, we prove that $\alpha$ can be transformed into an $(\ell-1)$-coloring $\alpha'$ by recoloring each vertex at most $h$ times.

\section{Proof of Theorem~\ref{cactus}}\label{Cactus} 
In this section, we  prove Theorem~\ref{cactus}, beginning with a key lemma that will be used throughout its proof. For any two colorings $\gamma_1$ and $\gamma_2$ of $G$, we construct a digraph $D_{\gamma_1\gamma_2}$ from $G$ such that $V(D_{\gamma_1\gamma_2})=V(G)$, and $xy$ is an arc in $D_{\gamma_1\gamma_2}$ if and only if $\gamma_1(y)=\gamma_2(x)$ and $x$ is adjacent to $y$ in $G$. 

\begin{lem}\label{Lemma-j=i+1} Let $D_{\gamma_1\gamma_2}$ be the digraph constructed from a graph $G$ and two colorings $\gamma_1$ and $\gamma_2$.
Then there exists a transformation from $\gamma_1$ to $\gamma_2$ in which each vertex $v\in V(G)$ is recolored at most once if and only if $D_{\gamma_1\gamma_2}$ contains no dicycle. 
\end{lem}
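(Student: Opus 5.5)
The proof has two directions. Throughout, the arc $xy$ in $D_{\gamma_1\gamma_2}$ should be read as a precedence constraint. It says that $y$ currently holds (under $\gamma_1$) the color that $x$ wants (under $\gamma_2$). So if both are recolored at most once and $x$ is recolored at all, then $y$ must be recolored strictly before $x$. The point to handle carefully is which vertices are actually forced to move.

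\textbf{Necessity.} Suppose a transformation recolors each vertex at most once, and suppose $C=u_0u_1\ldots u_ku_0$ is a dicycle. Consider the arc $u_iu_{i+1}$. Since $u_i$ and $u_{i+1}$ are adjacent in $G$ and $\gamma_2$ is proper, $\gamma_2(u_{i+1})\neq\gamma_2(u_i)=\gamma_1(u_{i+1})$. Hence $u_{i+1}$ must be recolored, and this holds for every vertex on $C$. Each vertex is recolored exactly once, so let $t(v)$ be the step at which $v$ changes from $\gamma_1(v)$ to $\gamma_2(v)$. When $u_i$ is recolored to $\gamma_2(u_i)=\gamma_1(u_{i+1})$, the neighbour $u_{i+1}$ must no longer carry $\gamma_1(u_{i+1})$, so $t(u_{i+1})<t(u_i)$. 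Going around the cycle gives $t(u_0)>t(u_1)>\cdots>t(u_k)>t(u_0)$, a contradiction. Digons are covered by the same argument with $k=1$.

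\textbf{Sufficiency.} Suppose $D:=D_{\gamma_1\gamma_2}$ is acyclic. Let $S=\{v:\gamma_1(v)\neq\gamma_2(v)\}$. Take a topological order of $D$ in which every arc $xy$ has $y$ before $x$; for instance, repeatedly remove a sink. Recolor the vertices of $S$ one by one in this order, each directly from $\gamma_1(v)$ to $\gamma_2(v)$.

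To check properness, consider the moment $v\in S$ is recolored to $\gamma_2(v)$, and let $w$ be any neighbour of $v$.
\begin{itemize}
\item If $w$ has already been recolored, or $w\notin S$, then $w$ currently has color $\gamma_2(w)$, which differs from $\gamma_2(v)$ because $\gamma_2$ is proper.
\item If $w\in S$ has not yet been recolored, it has color $\gamma_1(w)$. If $\gamma_1(w)=\gamma_2(v)$, then $vw$ is an arc of $D$, so $w$ precedes $v$ in the order and would already have been recolored, a contradiction.
\end{itemize}
Thus every intermediate coloring is proper, and each vertex is recolored at most once.

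The main obstacle is only bookkeeping. One must make sure the arc direction matches the order convention, and note that vertices with $\gamma_1=\gamma_2$ are never recolored yet cause no conflict. Arcs into such a vertex $w$ are harmless, since $\gamma_1(w)=\gamma_2(w)$ differs from $\gamma_2$ of any neighbour.
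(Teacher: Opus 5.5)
Your proof is correct and follows essentially the same route as the paper. For sufficiency, both recolor in a topological order that processes sinks first. For necessity, both use that an arc $xy$ forces $y$ to be recolored before $x$, which is impossible around a dicycle; the paper phrases this via the first-recolored vertex of $C$. Your explicit check that every vertex on a dicycle must change color, since $\gamma_2(u_{i+1})\neq\gamma_2(u_i)=\gamma_1(u_{i+1})$, makes precise a step the paper leaves implicit.
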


\begin{proof}
We first prove the sufficiency.
If $D_{\gamma_1\gamma_2}$ contains no dicycle, then it admits a topological ordering $v_1,v_2,\ldots,v_{|V(G)|}$, such that $v_k$ has no out-neighbor in $\{v_1,v_2,\ldots,v_k\}$ for each $k\in \{1,2,\ldots,|V(G)|\}$. By processing the vertices in reverse topological order, we recolor each vertex $v$ from $\gamma_1(v)$ to $\gamma_2(v)$, thereby completing the transformation.

Then we prove the necessity by contradiction. 
Suppose that $D_{\gamma_1\gamma_2}$ contains a dicycle, denoted by $C$. Let $v_i$ be the vertex in $V(C)$ that is recolored first in the transformation from $\gamma_1$ to $\gamma_2$. Since there exists a vertex $v_{j}\in V(C)$ such that $\gamma_2(v_i)=\gamma_1(v_{j})$, 
the vertex $v_i$ can not be recolored from $\gamma_1(v_i)$ to $\gamma_2(v_i)$ in one step, a contradiction.
\end{proof}
%\lh{For convenience, we restate Theorem \ref{cactus} below as follows.}

%\begin{thmbis}{cactus}
%Let $G$ be a cactus graph and $\ell$ be an integer with $\ell\geq 4$. For every pair of $\ell$-colorings $\alpha$ and $\beta$, there is a transformation from $\alpha$ to $\beta$ by recoloring each vertex at most $3$ times. 
%\end{thmbis}
%\lzx{Repeat? and Thm 1.2 below?}
 
\begin{proof}[\noindent{\bf Proof of Theorem~\ref{cactus}.}]
Note that $\mathcal{R}_{\ell}(G)$ is connected for $\ell \geq 4$ (see \cite{Dyerdeg2006,Cerecedamix2009}). Let $\mathcal{P}$ denote the set of all paths from $\alpha$ to $\beta$ in $\mathcal{R}_{\ell}(G)$. For a path $P=\alpha\cdots\beta\in \mathcal{P}$ whose vertices represent colorings of $G$, we can partition $P$ into $i_P$ segments $b_0\cdots b_1, b_1\cdots b_2, \ldots, b_{i_{P-1}}\cdots b_{i_P}$, where $b_0=\alpha, b_{i_P}=\beta$, such that every vertex $v\in V(G)$ is recolored at most once from $b_i$ to $b_{i+1}$ for each $i\in \{0,1,\ldots,i_{P-1}\}$. We call $b_0,b_1,\ldots,b_{i_P}$ a {\it step transformation} of $P$ and $i_P$ a {\it step transformation number}. In order to prove Theorem~\ref{cactus}, it suffices to show that there exists a step transformation from $\alpha$ to $\beta$ with step transformation number at most $3$.

We proceed by induction on the number $q$ of blocks of $G$.
% that there exists a step transformation of $G$ with step transformation number at most $3$.
If $q=1$, then $G$ is either an edge or a cycle. The edge case is trivial, so we assume that $G$ is a cycle. In this case, we construct a step transformation $\alpha,c_1,c_2,\beta$ such that $D_{c_1\beta}$ contains no digon and $D_{c_2\beta}$ contains no dicycle.
Let $G=x_0x_1\ldots x_nx_0$ be a cycle with $n\geq 2$. In the following, the subscripts of $x$ are taken modulo $n+1$. We begin by constructing a coloring $c_1$ such that $D_{c_1\beta}$ contains no digon. If $D_{\alpha\beta}$ contains no digon, then let $c_1=\alpha$. Otherwise, we modify $\alpha$ to obtain $c_1$ as follows.
For each maximal digon trail $x_{k}x_{k+1}\ldots x_{k+p}$, where the vertices appear in order along $G$ and each $x_{k+{i}}x_{k+{i+1}}$ forms a digon for $i\in \{0,1,\ldots,p-1\}$, recolor $x_{k+j}$ to a color in $\{1,2,\ldots,\ell\}\setminus\{\alpha(x_{k+j-1}), \alpha(x_{k+j}), \alpha(x_{k+j+1})\} $ for each $j\equiv 1 \pmod 2$ and $1\leq j\leq p$. Moreover, if $p$ is odd, then $x_{k+{p}}$ is among the recolored vertices. If it is also the tail of an out-arc $x_{k+p}x_{k+p+1}$ in $D_{\alpha\beta}$, which implies $\alpha(x_{k+p-1})=\beta(x_{k+p})=\alpha(x_{k+{p+1}})$, then we further exclude  $\beta(x_{k+{p+1}})$ from the color choices for $x_{k+{p}}$. It is worth noting that $D_{\alpha\beta}$ can form a complete dicycle only if $G$ is an even cycle, in which case the above recoloring procedure remains valid. By construction, each recolored vertex has no in-neighbor in $D_{c_1\beta}$, and hence $D_{c_1\beta}$ contains no digon.
Next, we construct a coloring $c_2$ from $c_1$ such that $D_{c_2\beta}$ contains no dicycle. If $D_{c_1\beta}$ is not a dicycle, then let $c_2=c_1$. Otherwise, we may assume $D_{c_1\beta}=x_0x_1\ldots x_nx_0$ is a dicycle, and we modify $c_1$ to obtain $c_2$ as follows. Recolor $x_0$ with a color in $\{1,2,\ldots,\ell\} \setminus \{c_1(x_n),c_1(x_0),c_1(x_1)\}$. Furthermore, if this recoloring results in a digon $x_0x_1$, which implies $x_0$ is recolored to $c_1(x_2)$, then we recolor $x_1$ with a color in $\{1,2,\ldots,\ell\}\setminus \{c_1(x_1),c_1(x_2),\beta(x_2)\}$. By the recoloring operations, there is no arc between $x_n$ and $x_0$ in $D_{c_2\beta}$, and exactly one arc between $x_i$ and $x_{i+1}$ for each $i\in\{0,1\}$. Thus, 
$D_{c_2\beta}$ has no dicycle. By construction and Lemma~\ref{Lemma-j=i+1}, each vertex of $G$ is recolored at most once in each of the transformations from $\alpha$ to $c_1$, from $c_1$ to $c_2$, and from $c_2$ to $\beta$. Therefore, $\alpha,c_1,c_2,\beta$ form a step transformation.

Hence, we may assume that $q\geq 2$. This implies that $G$ contains end blocks. Let $B$ be an end block and $u\in V(B)$ be a cut vertex of $G$. By induction hypothesis on $G'=G-(V(B)\setminus\{u\})$, there exists a step transformation denoted by $c'_0,c'_1,c'_2,c'_3$, where $c'_0=\alpha|_{G'}$ and $c'_3=\beta|_{G'}$. Note that it is possible that $c'_i=c'_{i+1}$ for some $i\in\{0,1,2\}$. To obtain the desired step transformation on the entire graph $G$, it suffices to construct a step transformation $c''_0,c''_1,c''_2,c''_3$ on $B$ such that $c''_0=\alpha|_{B}$, $c''_3=\beta|_{B}$, and $c''_i(u)=c'_i(u)$
for all $i\in \{0,1,2,3\}$. This suffices because 
if both $D_{c'_ic'_{i+1}}$ and $D_{c''_ic''_{i+1}}$ contain no dicycle for each $i\in\{0,1,2\}$, then the combined digraph also contains no dicycle.
By Lemma~\ref{Lemma-j=i+1}, this ensures a valid step transformation on $G$.

% To complete the proof, we shall construct a step transformation on the entire graph $G$, denoted by $\alpha,c_a,c_b,\beta$, such that $c_a|_{G'}=c'_1$ and $c_b|_{G'}=c'_2$. 
% By definition, $B$ is either an edge or a cycle.
% For the former case, assume $B=uv$. We
% construct a coloring $c_a$ from $c_1'$ by recoloring $v$ with a color in $\{1,2,\ldots,\ell\}\setminus \{\alpha(u),c_1'(u),c_2'(u)\}$, and a coloring $c_b$ from $c_2'$ by recoloring $v$ with a color in $\{1,2,\ldots,\ell\}\setminus \{c_2'(u),\beta(u)\}$. By Lemma \ref{Lemma-j=i+1}, $D_{c'_ic'_{i+1}}$ has no dicycle for each $i\in\{0,1,2\}$. The same holds for 
% $D_{\alpha c_{a}}$, $D_{c_{a}c_{b}}$, and $D_{c_{b}\beta}$, due to the recoloring operations on $v$. Therefore, by Lemma \ref{Lemma-j=i+1} again, we conclude that $\alpha, c_a, c_b, \beta$ form a step transformation.
By definition, $B$ is either an edge or a cycle. For the former case, assume $B=uv$.
Choose $c''_1(v)$ to be a color in $\{1,2,\ldots,\ell\}\setminus \{c_0''(u),c_1''(u),c_2''(u)\}$ and choose $c''_2(v)$
from $\{1,2,\ldots,\ell\}\setminus \{c_2''(u),c_3''(u)\}$. With this choice, it follows that $D_{c''_ic''_{i+1}}$ has no dicycle for each $i\in\{0,1,2\}$. Therefore, the desired step transformation on $B$ is established. Hence, we may assume that
$B$ is a cycle, say $uv_1v_2\ldots v_nu$. The proof is split into two cases.
% In this case, we also construct $c_a$ and $c''_2$ from $c'_1$ and $c'_2$, respectively, to obtian the desired step transformation. Similar to the edge case, it suffices to gurantee that $D_{c''_ic''_{i+1}}$ has no dicycle by Lemma \ref{Lemma-j=i+1}, where $c''_0=\alpha|_B$, $c''_1=c_a|_B$, $c''_2=c''_2|_B$, and $c''_3=\beta|_B$

% Now we get a transformation of $u$: $\alpha(u),c_a(u),c''_2(u),\beta(u)$. Every time when $u$ is recolored in $G'$, assign a new coloring to $B$ at this time. We next describe the recoloring procedure for the block $B$.

{\bf Case 1.} $|V(B)|=3$.

We first consider the case that  $|\{c''_0(v_1),c''_0(v_2),c''_0(u),c''_2(u)\}|\leq 3$ or $c''_0(v_i)\neq c''_1(u)$ for each $i\in\{1,2\}$. 
If $c''_0(v_i)\neq c''_1(u)$ for each $i\in\{1,2\}$, then we construct a coloring $c''_1$ from $c''_0$
by recoloring  $v_1$ with a color $c''_1(v_1)\in \{1,2,\ldots,\ell\}\setminus \{c''_0(v_2),c''_1(u),c''_2(u)\}$, and $v_2$ with $c''_1(v_2)\in \{1,2,\ldots,\ell\}\setminus \{c''_1(v_1),c''_1(u),c''_2(u)\}$;
%%%%%%%%%%%%%%%%%%%%%%%%%%%%%%%%%%%%%%%%%%%%%%%%%%%%%
if $c''_0(v_i)= c''_1(u)$ for some $i\in\{1,2\}$, say $v_1$, and $|\{c''_0(v_1),c''_0(v_2),c''_0(u),c''_2(u)\}|\leq 3$, then recolor $v_1$ with $c''_1(v_1)\in \{1,2,\ldots,\ell\}\setminus \{c''_0(v_1),c''_0(v_2),c''_0(u),c''_2(u)\}$, and $v_2$ with $c''_1(v_2)\in \{1,2,\ldots,\ell\}\setminus \{c''_1(v_1), c''_1(u),c''_2(u)\}$. In both cases above, we obtain a coloring $c''_1$ of $B$ such that between any pair of vertices in $D_{c''_0c''_1}$,  there is at most one arc, and $v_2$ has no in-neighbor. This implies that
$D_{c''_0c''_1}$ contains no dicycle. Note that  $c''_1(v_i)\neq c''_2(u)$ for each $i\in\{1,2\}$. Now, we 
construct a coloring $c''_2$ from $c''_1$ as follows.
First, recolor $v_1$ with a color $c''_2(v_1)\in \{1,2,\ldots,\ell\}\setminus \{c''_1(v_2),c''_2(u),c''_3(u)\}$.
If $|\{c''_2(v_1),c''_3(v_1),c''_2(u),c''_3(u)\}|\leq 3$, then recolor $v_2$ with $c''_2(v_2)\in\{1,2,\ldots,\ell\}\setminus \{c''_2(v_1),c''_3(v_1),c''_2(u),c''_3(u)\}$; if $|\{c''_2(v_1),c''_3(v_1),c''_2(u),c''_3(u)\}|=4$,  then  recolor $v_2$ with $c''_2(v_2)=c''_3(u)$ when $c''_3(v_2)=c''_2(v_1)$, and with $c''_2(v_2)=c''_3(v_1)$ when $c''_3(v_2)\neq c''_2(v_1)$. In all cases above, we obtain a coloring $c''_2$ of $B$ such that in $D_{c''_1c''_2}$,  there is at most one arc between any pair of vertices, and $u$ has no out-neighbor. Similarly, in $D_{c''_2c''_3}$,  there is at most one arc between any pair of vertices. If $|\{c''_2(v_1),c''_3(v_1),c''_2(u),c''_3(u)\}|\leq 3$, then $v_2$ has no in-neighbor; and if $|\{c''_2(v_1),c''_3(v_1),c''_2(u),c''_3(u)\}|=4$, then there is no arc between $u$ and $v_1$. This implies that both $D_{c''_1c''_2}$ and $D_{c''_2c''_3}$ contain no dicycle.
By Lemma~\ref{Lemma-j=i+1}, $c''_0,c''_1,c''_2,c''_3$ is a step transformation on $B$.

Hence, we may assume that $|\{c''_0(v_1),c''_0(v_2),c''_0(u),c''_2(u)\}|= 4$ and $c''_0(v_i)= c''_1(u)$ for some $i\in\{1,2\}$, say $v_1$. Note that $c''_2(u)\notin \{c''_0(v_1),c''_0(v_2),c''_0(u)\}$ and $c''_0(v_2)\neq c''_1(u)$.
We first construct $c''_1$ from $c''_0$ by recoloring $v_1$ with $c''_1(v_1)=c''_2(u)$, and recoloring $v_2$ with
$c''_1(v_2)=c''_0(u)$ if $c''_3(u)=c''_0(u)$, or $c''_1(v_2)=c''_0(v_2)$ otherwise. 
By construction, in $D_{c''_0c''_1}$, there is at most one arc between any pair of vertices, and $v_1$ has no out-neighbor. This implies that
$D_{c''_0c''_1}$ contains no dicycle. Note that  $c''_1(v_1)=c''_2(u)$ and $c''_1(v_2)\neq c''_2(u)$. Now, we 
construct a coloring $c''_2$ from $c''_1$ as follows.
First, recolor $v_1$ with $c''_2(v_1)\in \{c''_0(v_1),c''_0(v_2),c''_0(u),c''_2(u)\}\setminus \{c''_1(v_1),c''_1(v_2),c''_1(u)\}$. If $|\{c''_2(v_1),c''_2(u),c''_3(v_1),c''_3(u)\}|\leq 3$, then recolor $v_2$ with $c''_2(v_2)\in \{1,2,\ldots,\ell\}\setminus \{c''_2(v_1),c''_2(u),c''_3(v_1),c''_3(u)\}$; if $|\{c''_2(v_1),c''_2(u),c''_3(v_1),c''_3(u)\}|=4$, then recolor $v_2$ with $c''_2(v_2)=c''_3(u)$ if $c''_3(v_2)=c''_2(v_1)$, and with $c''_2(v_2)=c''_3(v_1)$ if $c''_3(v_2)\neq c''_2(v_1)$.  
In all cases above, we obtain a coloring $c''_2$ of $B$ such that in $D_{c''_1c''_2}$,  there is at most one arc between any pair of vertices, and $v_1$ has no out-neighbor. Moreover, in $D_{c''_2c''_3}$,  there is at most one arc between any pair of vertices. If $|\{c''_2(v_1),c''_3(v_1),c''_2(u),c''_3(u)\}|\leq 3$, then $v_2$ has no in-neighbor; and if $|\{c''_2(v_1),c''_3(v_1),c''_2(u),c''_3(u)\}|=4$, then there is no arc between $u$ and $v_1$. This implies that both $D_{c''_1c''_2}$ and $D_{c''_2c''_3}$ contain no dicycle.
By Lemma~\ref{Lemma-j=i+1}, $c''_0,c''_1,c''_2,c''_3$ is a step transformation on $B$.

%Consequently, $D_{bh}$ is a union of dipaths and isolated vertices, and we get $h\leq 3$ by Lemma~\ref{Lemma-j=i+1}.

{\bf Case 2.} $|V(B)|\geq 4$.

To facilitate the construction of a coloring $c''_1$ of $B$ from $c''_0$, we define a {\it good recoloring operation} on $v_i$, for the triples $(i,j,k) = (1,2,3)$ and $(i,j,k) = (n, n-1, n-2)$, as follows.
 
\begin{itemize}
    \item set $c''_1(v_i)\in \{1,2,\ldots,\ell\}\setminus \{c''_1(u),c''_2(u),c''_0(v_j)\}$ if $c''_0(v_i)\neq c''_1(u)$;
    \item set $c''_1(v_i)\in \{1,2,\ldots,\ell\}\setminus \{c''_0(u),c''_1(u), c''_2(u),c''_0(v_j)\}$ if  $c''_0(v_i)= c''_1(u)$ and $|\{c''_0(u),c''_1(u), \allowbreak c''_2(u),c''_0(v_j)\}|\leq3$;
    \item set $c''_1(v_i)=c''_0(v_j)$ and $c''_1(v_j)\in \{1,2,\ldots,\ell\}\setminus \{c''_0(v_i),c''_0(v_j),c''_0(v_k)\}$ if $c''_0(v_i)= c''_1(u)$ and $|\{c''_0(u),c''_1(u),c''_2(u),c''_0(v_j)\}|=4$.
\end{itemize}

Now, we construct a coloring $c''_1$ of $B$ from $c''_0$ such that $D_{c''_0c''_1}$ has no dicycle and $c''_1(v_i)\neq c''_2(u)$ for each $i\in\{1,n\}$ as follows. If $|V(B)|\geq 5$, then we take the good recoloring operations on $v_1$ and $v_n$. For each triple $(i,j,k)\in \{(1,2,3),(n, n-1, n-2)\}$, there is at most one arc between any pair in $\{u,v_i,v_j,v_k\}$ in $D_{c''_0c''_1}$. Moreover, $v_i$ has no in-neighbor if the first item of the good recoloring operation is applied, no out-neighbor if the second, and $v_j$ has no out-neighbor if the third. In particular, the construction remains valid when $|V(B)|= 5$, since at most one vertex in $\{v_2, v_{n-1}\}$ is recolored. This is ensured by the condition that the sets $\{c''_0(u),c''_1(u),c''_2(u),c''_0(v_j)\}$ differ for $j=2$ and $j=n-1$. Hence, $D_{c''_0c''_1}$ has no dicycle.
For the case $|V(B)|=4$, if $|\{c''_0(u),c''_1(u),c''_2(u),c''_0(v_2)\}|\leq 3$, then we take the good recoloring operations on $v_1$ and $v_3$; if $|\{c''_0(u),c''_1(u),c''_2(u),c''_0(v_2)\}|= 4$, then 
we set $c''_1(v_1)=c''_1(v_3)=c''_0(v_2)$ and choose $c''_1(v_2)\in \{1,2,\ldots,\ell\}\setminus \{c''_0(v_1),c''_0(v_2),c''_0(v_3)\}$. In both cases, 
there is at most one arc between any pair in $\{u,v_1,v_2,v_3\}$ in $D_{c''_0c''_1}$. Moreover, in the former case, both of $v_1$ and $v_3$ have no in- or out-neighbor, and in the latter, $v_2$ has no out-neighbor. Thus, $D_{c''_0c''_1}$ has no dicycle.

%If $c''_0(v_2)\neq c''_0(u)$, then we get that these at least three options for $c''_1(v_1)$ are pairwise distinct; if $c''_0(v_2)=c''_0(u)$, then the third case cannot appear, since there are at least two options for $c''_1(v_1)$ in the second case, considering the option for $c''_1(v_1)$ in the first case, these at least three options for $c''_1(v_1)$ are pairwise distinct. 
%We say an option is a {\it good option for $v_1$} if this option can recolor $v_1$ to $c''_1(v_1)\notin\{c''_0(u),c''_1(u),c''_2(u)\}$ by recoloring $v_1$ and $v_2$ at most once. A good option for $v_1$ always exists since we have at least three kinds of choices for color of $v_1$ which is not $c''_0(u)$ in these options. Similarly, we can define {\it good option for $v_n$}
% When $|V(B)|\geq 5$, we do a good option for $v_1$, then do a good option for $v_n$, finally recolor $u$ to $c''_1(u)$ to get $c''_1$; When $|V(B)|=4$, we do a good option for $v_1$, then recolor $v_3$ to $c''_1(v_1)$, finally recolor $u$ to $c''_1(u)$ to get $c''_1$. This $c''_1$ satisfies our need. 

% With a similar argument to Case 1, we construct a coloring $c''_1$ from $c''_0$ as follows. First, for each $i\in\{1,n\}$, recolor $v_i$ with a color  $c''_1(v_i)$ in $\{1,2,\ldots,\ell\}\setminus\{c''_0(u),c''_1(u),c''_2(u)\}$. If $|V(B)|\leq5$, then recolor all vertices of $V(B)\setminus \{u,v_1,v_n\}$ with distinct colors in $\{1,2,\ldots,\ell\}\setminus\{c''_1(v_1),c''_1(v_n)\}$; if $|V(B)|\geq6$, then

By Lemma \ref{Lemma-j=i+1}, to complete the proof, it suffices to construct a coloring $c''_2$ of $B$ from $c''_1$ such that both $D_{c''_1c''_2}$ and $D_{c''_2c''_3}$ contain no dicycle. Without loss of generality, assume that $c''_3(v_n)=c''_2(u)$ if
$c''_2(u)\in\{c''_3(v_1),c''_3(v_n)\}$.
We begin by recoloring $v_1$ with a color $c''_2(v_1)\in \{1,2,\ldots,\ell\} \setminus \{c''_2(u),c''_1(v_2),c''_3(u)\}$. Then, for each $i\in \{2,3,\ldots,n-1\}$, we recolor $v_i$ with $c''_2(v_i)\in \{1,2,\ldots,\ell\}\setminus \{c''_2(v_{i-1}),c''_1(v_{i+1}),c''_3(v_{i-1})\}$. If we choose a color $c''_2(v_n)$ for $v_n$ from 
the color set $\{1,2,\ldots,\ell\}\setminus\{c''_2(u),c''_2(v_{n-1})\}$, then under this construction, $D_{c''_1c''_2}$ contains no dicycle, whereas $D_{c''_2c''_3}$ may contain a dicycle, as a
digon between $u$ and $v_n$ or between $v_{n-1}$ and $v_n$, or as a spanning dicycle along $uv_n\cdots v_1u$. To avoid this, we shall further restrict the available colors for $v_n$ by removing more colors from the color set $\{1,2,\ldots,\ell\}\setminus\{c''_2(u),c''_2(v_{n-1})\}$.
 
If $c''_2(u)\notin\{c''_3(v_1),c''_3(v_n)\}$, then choose $c''_2(v_n)\in\{1,2,\ldots,\ell\}\setminus\{c''_2(u),c''_2(v_{n-1}),c''_3(v_{n-1})\}$. Under this construction, $D_{c''_2c''_3}$ contains no dicycle, since $u$ has no in-neighbor and the arc $v_{n-1}v_n$ does not exist. Hence, we may assume $c''_2(u)\in\{c''_3(v_1),c''_3(v_n)\}$, which implies $c''_3(v_n)=c''_2(u)$ by assumption. If the set $\{1,2,\ldots,\ell\}\setminus\{c''_2(v_{n-1}),c''_3(v_{n-1}),c''_2(u),c''_3(u)\}$ is not empty, then we recolor $v_n$ with a color $c''_2(v_n)$ from this set. Again, in this case, $D_{c''_2c''_3}$ contains no dicycle, since $v_n$ has no in-neighbor. Therefore, it remains to consider the case where $\ell=|\{c''_2(v_{n-1}),c''_3(v_{n-1}),c''_2(u),c''_3(u)\}|=4$ and $c''_3(v_n)=c''_2(u)$. In this case, we recolor $v_n$ with the color $c''_2(v_n)=c''_3(v_{n-1})\in \{1,2,\ldots,\ell\}\setminus\{c''_2(v_{n-1}),c''_2(u),c''_3(u)\}$. Under this construction, $D_{c''_2c''_3}$ contains no dicycle, since $u$ has no out-neighbor and the arc $v_nv_{n-1}$ does not exist.

This completes the proof of Theorem~\ref{cactus}.
\end{proof}

\section{Proof of Theorem~\ref{K23}}\label{last}

In this section, we shall prove Theorem~\ref{K23}, beginning with several key lemmas.

\begin{lem}\label{Lemma-ell>5}
Let $\ell\geq 6$ and $q\leq3$ be integers, and let $H$ be either an outerplanar graph or $K_4$ with $V(H)=\{v_1,v_2,\ldots,v_n\}$, where $n\geq 2$. Suppose that the vertex $v_n$ is to be recolored in the order $c_0(v_n), c_1(v_n),\ldots,c_q(v_n)$.
Then for every pair of $\ell$-colorings $\alpha'$ and $\beta'$ of $H$ satisfying $\alpha' (v_n)=c_0(v_n)$ and $\beta'(v_n)=c_q(v_n)$, there exists a transformation from $\alpha'$ to $\beta'$, in which each vertex is recolored at most $3$ times and $v_n$ follows the prescribed coloring order.
\end{lem}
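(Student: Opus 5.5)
We follow the strategy of the Bousquet--Perarnau $3$-recoloring argument for $2$-degenerate graphs and the step-transformation framework from Section~\ref{Cactus}. The aim is to produce a step transformation $c_0,c_1,c_2,c_3$ of $H$, extending the prescribed sequence on $v_n$ (padding with $c_{i+1}(v_n)=c_i(v_n)$ if $q<3$), with $c_0=\alpha'$ and $c_3=\beta'$, such that each $D_{c_ic_{i+1}}$ is acyclic. By Lemma~\ref{Lemma-j=i+1}, each step can then be realized recoloring every vertex at most once. So every vertex is recolored at most $3$ times, and $v_n$ follows the prescribed order.

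First we fix an elimination ordering. If $H\cong K_4$, the argument is direct. If $H$ is outerplanar, it is $2$-degenerate and has at least two vertices of degree at most $2$. Repeatedly removing a vertex of degree at most $2$ other than $v_n$ gives an ordering $v_1,\dots,v_{n-1},v_n$. In this ordering each $v_i$ with $i<n$ has at most $2$ neighbors later in the ordering; call them its \emph{back neighbors}, following the convention that $v_n$ is processed first. In $K_4$, every vertex has at most $3$ later neighbors. We then assign colors $c_1(v),c_2(v)$ in the reverse order $v_{n-1},\dots,v_1$, treating the later vertices as already fixed. Arcs of $D_{c_ic_{i+1}}$ between $v$ and an earlier neighbor are handled when that earlier neighbor is processed.

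For a vertex $v$ with set $L$ of already-fixed neighbors, we choose $c_1(v)$ so that $c_1(v)\notin\{c_1(w),c_2(w)\}$ for every $w\in L$, and $c_1(v)\ne c_0(v)$ is not required. Then $D_{c_0c_1}$ has no arc from $v$ into $L$, because $c_0(w)$ vs.\ $c_1(v)$ is irrelevant; what matters is that the only arcs go in one direction. More precisely, we orient everything so that between $v$ and $w\in L$ the arcs point only toward the later vertex:
\begin{itemize}
\item[(a)] in $D_{c_0c_1}$ we need $c_1(v)\ne c_0(w)$, or else $c_1(w)\ne c_0(v)$;
\item[(b)] in $D_{c_1c_2}$ we need $c_2(v)\ne c_1(w)$;
\item[(c)] in $D_{c_2c_3}$ we need $c_2(v)\ne c_3(w)$.
\end{itemize}
Concretely, we require $c_1(v)$ to avoid $\{c_1(w),c_2(w)\}$ for $w\in L$, which gives at most $4$ forbidden colors when $|L|\le2$. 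Then in $D_{c_0c_1}$ the arc $w\to v$ (meaning $c_0(v)=c_1(w)$) may exist, but the arc $v\to w$ requires $c_0(w)=c_1(v)$. Similarly, we require $c_2(v)$ to avoid $\{c_1(v)\}\cup\{c_2(w),c_3(w)\}_{w\in L}$, which is at most $5$ colors. Then in $D_{c_1c_2}$ there is no arc $w\to v$ since $c_1(v)\ne c_2(w)$, and in $D_{c_2c_3}$ there is no arc $v\to w$ since $c_2(v)\ne c_3(w)$. With $\ell\ge6$, both choices are possible when $|L|\le2$.

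All arcs in $D_{c_1c_2}$ then go from earlier to later vertices, and all arcs in $D_{c_2c_3}$ go from later to earlier vertices, so both are acyclic. For $D_{c_0c_1}$, we instead orient it by choosing $c_1(v)\ne c_0(w)$ for $w\in L$. This makes all arcs go from later to earlier vertices, and the forbidden set becomes $\{c_0(w),c_2(w)\}$, since $c_2(w)$ is needed for $D_{c_1c_2}$; it may also need $c_1(w)$ for properness. That is up to $3|L|\le6$ colors, too many. The main obstacle is exactly this count. We resolve it as in Bousquet--Perarnau: the constraint $c_1(v)\ne c_1(w)$ is needed only for properness, and we drop the requirement $c_1(v)\ne c_2(w)$ by instead demanding $c_2(v)\ne c_1(w)$. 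So $c_1(v)$ avoids $\{c_0(w),c_1(w)\}_{w\in L}$ (at most $4$ colors) and $c_2(v)$ avoids $\{c_1(v)\}\cup\{c_1(w),c_2(w),c_3(w)\}_{w\in L}$, where $c_1(w)$ is needed for properness of the intermediate recoloring. That count can reach $7$, so we restrict: arcs $w\to v$ in $D_{c_1c_2}$ are allowed, and we forbid only $c_2(w),c_3(w)$, giving at most $5$ colors. In $D_{c_1c_2}$ the arcs from $v$ go toward later vertices, since the condition is $c_1(w)=c_2(v)$. Thus arcs can go in both directions only if $c_2(v)=c_1(w)$ and $c_2(w)=c_1(v)$, and we forbid that by excluding $c_1(w)$ only when $c_2(w)=c_1(v)$; since $c_1(v)\ne c_1(w)$ and a digon would need both, excluding one color per $w$ suffices, leaving the count at $\le 2\cdot2+1=5<6$. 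We will then check that no longer dicycle arises in $D_{c_1c_2}$ using the fact that processing order yields a consistent orientation except on digons.

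For $K_4$, with $|L|\le3$, this counting fails, and we handle it separately. We expect to color the remaining three vertices greedily using $\ell\ge6$, with the explicit small case analysis in the style of Case~1 of Theorem~\ref{cactus}. We expect the $K_4$ case and the verification that $D_{c_1c_2}$ is acyclic to be where the real care is needed.
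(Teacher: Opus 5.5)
\textbf{The decisive step fails: $D_{c_1c_2}$ can contain a dicycle.} Under your final rule, the arcs of $D_{c_1c_2}$ between $v$ and its later neighbors are not consistently oriented. You allow $w\to v$ (when $c_1(v)=c_2(w)$) and $v\to w'$ (when $c_2(v)=c_1(w')$) at the same time, and exclude only digons. So the promised check that no longer dicycle arises cannot succeed.

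Concretely, let $\ell=6$ and let $H$ be the triangle $v_1v_2v_3$, with $v_3$ prescribed $3,1,2,4$, $\alpha'(v_1,v_2)=(1,6)$ and $\beta'(v_1,v_2)=(5,3)$. Your rules permit $c_1(v_2)=5$, $c_2(v_2)=1$, then $c_1(v_1)=2$, $c_2(v_1)=5$. This gives $c_1=(2,5,1)$ and $c_2=(5,1,2)$ on $(v_1,v_2,v_3)$, a rotation of the colors around the triangle. Hence $D_{c_1c_2}$ is the dicycle $v_1v_2v_3v_1$, and by Lemma~\ref{Lemma-j=i+1} this phase cannot be done with one recoloring per vertex.

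What is missing is a reachability-based choice of orientation at $v$. The part $D'$ of $D_{c_1c_2}$ on the already processed vertices is acyclic, so you can name $L=\{w_a,w_b\}$ so that $D'$ has no dipath from $w_b$ to $w_a$. Then require $c_1(v)\notin c_0(L)\cup c_1(L)\cup\{c_2(w_b)\}$ and $c_2(v)\notin c_2(L)\cup c_3(L)\cup\{c_1(w_a)\}$. That is at most $5$ colors each; your extra condition $c_2(v)\neq c_1(v)$ is unnecessary and must be dropped to keep this count. Now only the arcs $w_a\to v$ and $v\to w_b$ can occur. A dicycle whose lowest-indexed vertex is $v$ would then need a dipath from $w_b$ to $w_a$ in $D'$, which you have excluded.

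A smaller slip: $c_2(v)\neq c_3(w)$ rules out the arc $w\to v$ of $D_{c_2c_3}$, not $v\to w$. Acyclicity still holds, because all its arcs then point from lower to higher index.

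With this repair your phase construction does prove the outerplanar case, by a route different from the paper's. The paper deletes a vertex $v_1\neq v_n$ of degree at most $2$, applies induction, and reinserts $v_1$ greedily. Each time it picks a color that is not the target of any neighbor in the next $\ell-3$ recolorings, so $v_1$ is recolored at most $\lceil 6/(\ell-3)\rceil+1\le 3$ times.

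\textbf{The $K_4$ case is not proved.} The counting behind your construction breaks there: the last processed vertex has three fixed neighbors, so $c_0(L)\cup c_1(L)$ alone can use up all six colors. For example, take $c_0=(1,2,3,4)$ on $(v_1,v_2,v_3,v_4)$ and $c_1(v_4)=5$; your rules allow $c_1(v_3)=6$ and $c_1(v_2)=1$, leaving no color for $c_1(v_1)$.

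The paper handles $K_4$ directly, outside any phase structure. Before each move of $v_4$ onto the current color of some $v_i$, it moves $v_i$ to a color outside $\alpha'(V(H))$ that also differs from the next target of $v_4$. Thus $v_1,v_2,v_3$ receive at most three such recolorings in total, with at most one of them recolored twice. It then sets $v_1,v_2,v_3$ to $\beta'$, using one temporary color on a single once-recolored vertex. You need an explicit argument of this kind to complete the proof.
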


\begin{proof}
We first consider the case that $H$ is $K_4$, so $n=4$. We perform the following operations prior to the recoloring of $v_4$ from $c_0(v_4)$ to $c_1(v_4)$. If $c_1(v_4)=c_0(v_i)$ for some $i\in \{1,2,3\}$, then recolor $v_i$ with a color in $\{1,2,\ldots,\ell\} \setminus \{\alpha'(v_1),\alpha'(v_2),\alpha'(v_3),\alpha'(v_4),c_2(v_4)\}$ (note that $c_2(v_4)$ may not exist).  Repeat above operations prior to each recoloring of $v_4$ from $c_{j}(v_4)$ to $c_{j+1}(v_4)$ for each $j\in \{1,\ldots,q-1\}$.
Since $q\leq 3$, each vertex in $\{v_1,v_2,v_3\}$ is recolored at most twice, and the total number of recolorings among them is at most three. With loss of generality, we may assume that $v_1$ has been recolored at most twice, and each vertex in $\{v_2,v_3\}$ at most once. Let $c'(v_1),c'(v_2),c'(v_3),c'(v_4)$ be the current colors of $v_1,v_2,v_3,v_4$, respectively, where $c'(v_4)=c_q(v_4)=\beta'(v_4)$.
If $\beta'(v_1)=c'(v_j)$ for some $j\in \{2,3\}$, then we recolor $v_j$ with a color in $\{1,2,\ldots,\ell\} \setminus \{c'(v_1),c'(v_2),c'(v_3),\beta'(v_{5-j}),\beta'(v_4)\}$, $v_1$ with $\beta(v_1)$, $v_{5-j}$ with $\beta(v_{5-j})$, and finally $v_j$ with $\beta(v_j)$, in order.
If $\beta(v_1)\neq c'(v_j)$ for each $j\in \{2,3\}$, then we recolor $v_1$ with $\beta(v_1)$, $v_2$ with a color in $\{1,2,\ldots,\ell\} \setminus \{\beta'(v_1),\beta'(v_2),\beta'(v_3),\beta'(v_4),c'(v_3)\}$, $v_3$ with $\beta'(v_3)$, and finally $v_2$ with $\beta'(v_2)$, in order. In both cases, we obtain a transformation from $\alpha'$ to $\beta'$, in which each vertex is recolored at most $3$ times and $v_4$ follows the prescribed coloring order.

Now, we consider the case that $H$ is an outerplanar graph. We proceed by induction on $n$. The case $n\leq 2$ is trivial. Suppose the statement  holds for all outerplanar graphs with at most 
$n-1$ vertices. Now we consider $|V(H)|= n$, where $n\geq 3$. Since each outerplanar graph has at least two vertices with degree at most $2$,  we can choose a vertex $v_1\neq v_n$ in $H$ such that $d(v_1)\leq 2$. By induction hypothesis on $H'=H-v_1$, each vertex of $H'$ is recolored at most $3$ times in the transformation from $\alpha'|_{H'}$ to $\beta'|_{H'}$. It remains to consider the recoloring of $v_1$. Since $d(v_1)\leq 2$, the number of unused colors on $N(v_1)$ is at least $\ell-2$. We recolor $v_1$ with one of these colors that is not the target color of any vertex in $N(v_1)$
in the next $\ell-3$ recolorings. Since each vertex in $N(v_1)$ is recolored at most $3$ times, the total number of color conflicts affecting $v_1$ is at most $3|N(v_1)|$. Therefore, the number of times we recolor $v_1$ is at most $\lceil{\frac{3|N(v_1)|}{\ell-3}}\rceil+1$, which is at most $3$. This completes the argument.
\end{proof}

\begin{proof}[\noindent{\bf Proof of Theorem~\ref{K23}(\ref{k23-6}).}]
We proceed by induction on the number of blocks of $G$, denoted by $p$.
If $p=1$, then $G$ is either a copy of $K_4$ or an outerplanar graph. In this case, the result follows directly from Lemma \ref{Lemma-ell>5}.
Hence, we may assume $p\geq 2$. Let $B$ be an end block of $G$ and $u\in V(B)$ be a cut vertex of $G$, which implies that $B$ is a copy of $K_4$ or an outerplanar graph. By induction hypothesis on $G'=G-(V(B)\setminus\{u\})$, we know that there exists a transformation of $G'$ from $\alpha|_{G'}$ to $\beta|_{G'}$ in which  each vertex is recolored at most $3$ times. 
This transformation preserves the prescribed coloring order of $u$,  so Lemma \ref{Lemma-ell>5} can be applied to extend the transformation on $B$. This completes the proof of Theorem~\ref{K23}(\ref{k23-6}).
\end{proof}

To complete the proof of Theorem~\ref{K23}(\ref{k23-5}), we shall show that it suffices to prove the following lemma, whose proof will be given at the end of this section. 

\begin{lem}\label{lem: chordal 5 to 5}
Let $H$ be a $K_{2,3}$-minor-free chordal graph. For every pair of $5$-colorings $\alpha'$ and $\beta'$, there exists a transformation from $\alpha'$ to $\beta'$ by recoloring each vertex at most $h$ times, where $h=692$. Moreover, in this transformation, for each vertex $v\in V(H)$, if $v$ is a vertex in a copy of $K_4$ but not in a block that is an outerplanar subgraph in $G$, then $v$ is recolored at most $h-3$ times.
\end{lem}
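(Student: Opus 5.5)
The plan is to reduce Lemma~\ref{lem: chordal 5 to 5} to Theorem~\ref{tw2}(\ref{k23-5}) applied block by block, plus a bounded local correction on each $K_4$ block. The key observation is structural. By Theorem~\ref{thm: k23 of Yuk}, every block of $H$ is either a $K_4$ or an outerplanar graph. Since $H$ is chordal, every block is a chordal $2$-connected outerplanar graph, i.e.\ a triangulated polygon. This has treewidth at most $2$, so Theorem~\ref{tw2}(\ref{k23-5}) applies to it with $t=1386$. The difficulty is that the $K_4$ blocks have treewidth $3$, and we must also glue blocks together at cut vertices. The target $h=692\approx 1386/2$ suggests the intended route is not gluing full $1386$-transformations. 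Instead we should use the ``half'' structure of the Bartier--Bousquet--Heinrich proof: each $5$-coloring is transformed, with each vertex recolored at most about $690$ times, into a canonical coloring that depends only on the graph. Two such half-transformations, one from $\alpha'$ and one reversed from $\beta'$, then meet at the same canonical coloring.

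Concretely, I would proceed as follows. First, take a perfect elimination ordering of $H$; since $\omega(H)\le 4$, each vertex has at most $3$ later neighbours forming a clique. Second, produce a graph $H^-$ by deleting one vertex from each $K_4$ block, using an independent set as in Lemma~\ref{cactus-I}(2). $H^-$ is $K_4$-minor-free and chordal, and the half-transformation from the treewidth-$2$ result brings any $5$-coloring of $H^-$ to a canonical coloring $\gamma$ with each vertex recolored at most $692-3$ times. Third, reinsert each deleted $K_4$ vertex $w$. Its three neighbours form a triangle, so at every moment at least two colors are free for $w$. Whenever a neighbour is about to take $w$'s current color, we recolor $w$ to a free color that also avoids that neighbour's next target. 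As in the degree argument of Lemma~\ref{Lemma-ell>5}, this costs one recoloring of $w$ per conflicting neighbour move. Here I would exploit that in the canonical process each neighbour changes color a bounded number of times relative to its own budget. Fourth, combine the two half-transformations $\alpha'\to\gamma$ and $\beta'\to\gamma$ into the final transformation.

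Blocks are handled by induction on the number of blocks, exactly as in the proofs of Theorems~\ref{cactus} and \ref{K23}(\ref{k23-6}). We fix the color sequence at the cut vertex $u$ coming from $H-(V(B)\setminus\{u\})$ and extend it into the end block $B$. For a $K_4$ end block with a prescribed sequence at $u$, the other three vertices only need to dodge $u$'s changes. As in the $K_4$ case of Lemma~\ref{Lemma-ell>5}, each dodge recolors one vertex, with at least one free color available since $5>4$. A final reordering to reach $\beta'$ then needs a constant number of further steps. This gives the refined bound $h-3$ for vertices lying only in $K_4$ blocks, because they are never subject to the long outerplanar process.

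The main obstacle I expect is the third step and the extension across cut vertices with only $5$ colors. A vertex $w$ in a $K_4$ whose neighbours are recolored $\Theta(h)$ times could itself need $\Theta(3h)$ recolorings if handled naively. To keep $w$ within $h-3$, the plan is to charge $w$'s recolorings to the recolorings of a single designated neighbour, namely the cut vertex or the later-eliminated vertex. The other two neighbours in the $K_4$ should only be recolored in response to that designated neighbour, so they move in lockstep with it. Making this lockstep scheme compatible with the treewidth-$2$ canonical process is the point I would need to verify carefully.
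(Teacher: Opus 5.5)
Your plan does not reach the bound in the statement, and it lacks the two ingredients that make a bound of $h=692$ possible.

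\emph{The bound.} The lemma asks for a transformation between two arbitrary $5$-colorings $\alpha'$ and $\beta'$ with every vertex recolored at most $h=692$ times. Your Step~4 concatenates $\alpha'\to\gamma$ with the reverse of $\beta'\to\gamma$, each allowed $h-3$ recolorings per vertex. A vertex may therefore be recolored about $2h-6=1378$ times. The heuristic ``$692\approx 1386/2$, so $692$ is a half-bound'' is backwards. Here $h=692$ is the constant of the chordal Lemma~\ref{LemmaA-3}, obtained by the best-choice strategy along a perfect elimination ordering, and it already bounds a full transformation between any two $5$-colorings. Doubling happens only later, when that lemma is used from both ends to reach $4$-colorings of a non-chordal graph. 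This gives $2h+3$ in Theorem~\ref{K23}, and $1386=2\cdot 692+2$ in Theorem~\ref{tw2} has the same doubled form. Theorem~\ref{tw2} also supplies no canonical coloring reachable within $689$ recolorings per vertex, so Step~2 is unsupported too.

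\emph{The structural part.} What is needed is an extension lemma with a prescribed color sequence at one vertex $u$, namely Lemma~\ref{Lemma-chordal-5to5}.

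For a chordal outerplanar graph, one takes a perfect elimination ordering ending at $u$, fixes $u$'s sequence, and applies best choice to the remaining vertices. This argument needs $u$ to be recolored at most $h-2$ times; otherwise an in-neighbour of $u$ may need $h+1$ recolorings. So outerplanar blocks cannot be glued one at a time as in your block induction: the cut vertex may already be recolored $h$ times inside another outerplanar block. The paper avoids this by merging all outerplanar blocks that share cut vertices into one chordal outerplanar \emph{part}. An outerplanar part is then only attached at a vertex lying solely in copies of $K_4$. The ``moreover'' bound $h-3\le h-2$ is exactly what makes that attachment legal.

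For a $K_4$ attached at $u$, note that with $5$ colors and a triangle neighbourhood, a dodging vertex has exactly one alternative color. So it cannot also avoid a neighbour's next target. Moreover $u$, recolored up to $h$ times, can chase the same vertex at every step. For example, with colors $1,2,3$ on $v_1,v_2,v_3$ and $u$ running through $4,1,5,4,1,5,\dots$, each of $u$'s moves forces $v_1$ to move. Hence your claim that these vertices are never subject to the long process is false. The bound $h-3$ needs the midway swap of the roles of $v_1$ and $v_2$ used in the paper.

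The same one-alternative observation undermines your Step~3 reinsertion. In addition, the vertex your independent set removes from a $K_4$ may be a cut vertex with more than three neighbours. The reinsertion therefore costs $\Theta(3h)$, and the lockstep scheme you leave to be verified has no evident way to work. The paper's route never needs that step.
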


To facilitate the application of Lemma \ref{lem: chordal 5 to 5} to Theorem~\ref{K23}(\ref{k23-5}) more conveniently and directly, we derive the following corollary, where a $4$-coloring is a coloring restricted in the color set $\{1,2,3,4\}$.

\begin{corollary}\label{cor: 5 to 4}
Let $H^*$ be a $K_{2,3}$-minor-free graph and $\alpha^*$ be a $5$-coloring of $H^*$. Then there exists a $4$-coloring $\beta^*$ of $H^*$, such that there is a transformation from $\alpha^*$ to $\beta^*$ by recoloring each vertex at most $h$ times, where $h=692$.
\end{corollary}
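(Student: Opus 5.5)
The plan is to reduce Corollary~\ref{cor: 5 to 4} to Lemma~\ref{lem: chordal 5 to 5} by adding edges. Given the $K_{2,3}$-minor-free graph $H^*$ and the $5$-coloring $\alpha^*$, I will first embed $H^*$ as a spanning subgraph of a $K_{2,3}$-minor-free chordal graph $H$. By Theorem~\ref{thm: k23 of Yuk}, every block of $H^*$ is either $K_4$ (already chordal) or outerplanar. Each outerplanar block can be triangulated by adding chords inside its inner faces to become a maximal outerplanar graph. Such a graph is chordal, since it has a perfect elimination ordering obtained by repeatedly removing degree-$2$ vertices. It also stays outerplanar, so the enlarged block is still $K_{2,3}$-minor-free. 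Adding edges inside blocks does not change the block structure. Hence $H$ is chordal, and by Theorem~\ref{thm: k23 of Yuk} it is $K_{2,3}$-minor-free, with every block a $K_4$ or an outerplanar graph. (If $H^*$ is disconnected, I will treat each component separately.)

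The catch is that $\alpha^*$ need not be a proper coloring of $H$, because new chords may join equally colored vertices. So I will not feed $\alpha^*$ to Lemma~\ref{lem: chordal 5 to 5} directly. Instead, I will add chords carefully: in each inner face of an outerplanar block with at least four vertices, a $5$-colored face boundary contains a pair of non-adjacent vertices of distinct colors. Therefore I will try to triangulate greedily, always adding a chord between differently colored vertices. If this works, $\alpha^*$ remains proper on $H$. I expect this to be the main obstacle. A face whose boundary alternates two colors, say $1,2,1,2,\dots$, admits no bichromatic chord that splits it properly except chords between a $1$ and a $2$ at odd distance. Such chords do exist on a cycle of length at least $4$, and any split produces two smaller even-alternating faces (length $\ge 4$) or triangles. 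So a bichromatic chord should always be available: on a cycle of length at least $4$ that is properly colored, some vertex $x$ has a non-neighbor on the cycle with a color different from $x$'s, unless every non-neighbor of every vertex shares its color. That would force at most two colors and a length-$4$ cycle $1,2,1,2$, in which both chords are monochromatic. I will need to handle that case separately, for instance by recoloring one vertex of such a $4$-face once beforehand, which costs an extra recoloring. This is why I would rather reorganize the argument to absorb it into the budget $h$.

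Once $H$ is obtained with $\alpha^*$ proper on it, I will choose any proper $4$-coloring $\beta'$ of $H$ using colors $\{1,2,3,4\}$. This exists because chordal graphs are perfect and $\omega(H)\le 4$, since a $K_5$ contains $K_{2,3}$; concretely, I can color greedily along the reverse of a perfect elimination ordering. Applying Lemma~\ref{lem: chordal 5 to 5} to $H$ with $\alpha'=\alpha^*$ and $\beta'$ gives a transformation in which each vertex is recolored at most $h=692$ times. Every intermediate coloring is proper on $H$, and hence proper on the spanning subgraph $H^*$. Setting $\beta^*=\beta'$ then proves the corollary.
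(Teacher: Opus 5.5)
\textbf{There is a genuine gap at the step you flag, and the obstruction is worse than an occasional $4$-face.}

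Suppose some inner face of an outerplanar block has a boundary cycle that $\alpha^*$ colors alternately with only two colors. This could be a $4$-face $1,2,1,2$, but equally a $6$-face $1,2,1,2,1,2$, and so on. Then \emph{no} chordal supergraph $H\supseteq H^*$ keeps $\alpha^*$ proper. The subgraph of $H$ induced by that cycle's vertices is chordal and contains a cycle, so it contains a triangle. A triangle on a two-colored vertex set has a monochromatic edge.

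Your own observation already shows this. A bichromatic chord in such a face only splits it into smaller alternating faces, so the recursion always ends in stuck $1,2,1,2$ quadrilaterals and never in triangles. Faces using three colors can also get stuck under careless greedy choices. For example, cutting off the $3$-vertex of a $5$-face $1,2,1,2,3$ leaves $1,2,1,2$. That part can be avoided by choosing ears well; the two-colored faces cannot.

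Your remedy of recoloring a vertex beforehand does not fit the budget. Lemma~\ref{lem: chordal 5 to 5} may itself recolor that vertex $h$ times, so the total becomes $h+1=693$. Nothing in the proposal absorbs this extra recoloring, so the bound $h=692$ is not established.

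\textbf{The missing idea is to contract rather than insist that $\alpha^*$ stay proper on a supergraph.} This is what the paper does.
\begin{itemize}
\item Triangulate every outerplanar block completely, regardless of colors, to obtain a chordal $K_{2,3}$-minor-free graph $H_1$.
\item Identify the endpoints of every added chord whose endpoints receive the same $\alpha^*$-color. The resulting graph $H_2$ is a minor of $H_1$, hence $K_{2,3}$-minor-free. It is chordal, since edge contraction preserves chordality.
\item Every monochromatic edge of $H_1$ was an added chord and has been contracted. So $\alpha^*$ descends to a proper $5$-coloring $\alpha_2$ of $H_2$.
\item Each identified class is monochromatic under the proper coloring $\alpha^*$, hence independent in $H^*$. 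One recoloring of a merged vertex in $H_2$ is simulated in $H^*$ by recoloring the members of its class one after another. This is proper at every moment, because every $H^*$-neighbor of a class member lies in an $H_2$-neighboring class. Thus each vertex of $H^*$ is recolored exactly as often as its class.
\item Apply Lemma~\ref{lem: chordal 5 to 5} to $H_2$ with $\alpha_2$ and any $4$-coloring $\beta_2$. Such a coloring exists by your perfectness argument, since $\omega(H_2)\le 4$. Lifting $\beta_2$ gives $\beta^*$ within the budget $h$.
\end{itemize}
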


\begin{proof}
We perform the following operations on $H^*$. For each block of $H^*$ that is an outerplanar graph, we add as many edges as possible such that each face, except for the exterior face, is bounded by a triangle, and it is still outerplanar.  Denote the resulting graph by $H_1$, which is a $K_{2,3}$-minor-free chordal graph.  Note that  $\alpha^*$ induces a (possibly non-proper) $5$-coloring of $H_1$. Next, we identify  $x$ and $y$ to a single vertex if $xy\in E(H^*)$ and $\alpha^*(x)=\alpha^*(y)$.
Let $H_2$ be the resulting graph, and let $\alpha_2$ be the corresponding coloring of $H_2$ obtained from $\alpha^*$. By construction, $H_2$ is also a $K_{2,3}$-minor-free chordal graph. By Lemma~\ref{lem: chordal 5 to 5}, for any $4$-coloring $\beta_2$ of $H_2$, there exists a  transformation from $\alpha_2$ to $\beta_2$ by recoloring each vertex in $H_2$ at most $h$ times, where $h=692$. Since $\beta_2$ corresponds to a $4$-coloring $\beta^*$ of $H^*$, the transformation from $\alpha_2$ to $\beta_2$ in $H_2$ corresponds to a transformation from $\alpha^*$ to $\beta^*$ in $H^*$.\end{proof}

\begin{proof}[\noindent{\bf Proof of Theorem~\ref{K23}(\ref{k23-5}) by applying Corollary \ref{cor: 5 to 4}.}]
Let $\alpha$ and $\beta$ be two $5$-colorings of $G$. By Corollary \ref{cor: 5 to 4}, $\alpha$ can be transformed to a $4$-coloring $\alpha_1$ by recoloring each vertex at most $h$ times, where $h=692$. Similarly, we can transform $\beta$ to a $4$-coloring $\beta_1$ by recoloring each vertex at most $h$ times. By Lemma~\ref{cactus-I}, $G$ can be decomposed into an independent set $I$ and a union of cactus graphs $G-I$. Under the colorings $\alpha_1$ and $\beta_1$, we recolor every vertex in $I$ with color $5$. By Theorem~\ref{cactus},  we get a transformation on $G-I$ from $\alpha_1|_{G-I}$ to $\beta_1|_{G-I}$ by recoloring each vertex at most $3$ times, after which we recolor each vertex $v$ in $I$ with $\beta_1(v)$. Therefore, there exists a transformation from $\alpha$ to $\beta$ by recoloring each vertex at most $2h+3=1387$ times.
\end{proof}

Finally, we provide a detailed proof of Lemma~\ref{lem: chordal 5 to 5}. Before that, with a similar argument to the proof of Theorem \ref{K23}(\ref{k23-6}), we need the following lemma.

\begin{lem}\label{Lemma-chordal-5to5}
Let $h = 692$, and let $H$ be either a chordal outerplanar graph or $K_4$ with $V(H) = \{v_1, v_2, \ldots, v_n\}$, where $n \geq 2$. Suppose that the vertex $v_n$ is to be recolored in the order $c_0(v_n), c_1(v_n), \ldots, c_q(v_n)$. We assume the following:
\begin{itemize}
    \item[(1)] If $H\cong K_4$, then $q \leq h$;
    \item[(2)] If $H$ is a chordal outerplanar graph, then $q \leq h - 2$.   
\end{itemize}

Then for every pair of $5$-colorings $\alpha'$ and $\beta'$ of $H$ satisfying $\alpha'(v_n) = c_0(v_n)$ and $\beta'(v_n) = c_q(v_n)$, there exists a transformation from $\alpha'$ to $\beta'$ in which each vertex is recolored at most $h$ times, and $v_n$ follows the prescribed coloring order. Moreover, if $H \cong K_4$, then each of the vertices $v_1$, $v_2$, and $v_3$ is recolored at most $h - 3$ times in this transformation.
\end{lem}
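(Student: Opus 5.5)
We follow the scheme of Lemma~\ref{Lemma-ell>5}, now with only $5$ colors. The budget is the generous $h=692$, against a prescribed sequence of length $q\le h$ or $q\le h-2$. We treat the two cases separately.

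\textbf{Case $H\cong K_4$.} Here $v_1,v_2,v_3,v_4$ use four distinct colors at every moment, and with $5$ colors exactly one color is free. When $v_4$ moves from $c_j(v_4)$ to $c_{j+1}(v_4)$, the new color is either the free color, or the current color of some $v_i$ with $i\le 3$. In the second case we first recolor $v_i$ to the free color. This keeps every coloring proper, and each step of $v_4$ triggers at most one extra recoloring among $v_1,v_2,v_3$. A single vertex could absorb all of these, so we must control the distribution.

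We plan to show that each $v_i$ with $i\le 3$ is recolored at most about $q/3+O(1)$ times. To do this, we allow occasional extra moves that rotate the roles of $v_1,v_2,v_3$, which uses the symmetric group action on the three colors not used by $v_4$. At the end we need a final phase with $O(1)$ recolorings per vertex to reach $\beta'$ on $v_1,v_2,v_3$. Since $v_4$ is then fixed, the remaining task is to permute and recolor a triangle using $4$ available colors, which takes a constant number of moves per vertex. The crude bound $q$ plus a constant is too large, since we need at most $h-3$. The bound $(q+O(1))/1$ with balancing, roughly $\lceil q/3\rceil+O(1)\le h-3$, holds comfortably.

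\textbf{Case $H$ chordal outerplanar.} We induct on $n$. We pick a simplicial vertex $v_1\ne v_n$ of degree at most $2$; such a vertex exists in a chordal outerplanar graph with $n\ge3$. Its neighbors are adjacent, or it has a single neighbor. We apply the induction hypothesis to $H-v_1$, which contains $v_n$ with the same prescribed order. This gives a transformation where each neighbor of $v_1$ is recolored at most $h$ times. We then insert the recolorings of $v_1$. Because $v_1$ has at most two neighbors, it always has $3$ colors avoiding them. Whenever a neighbor is about to take $v_1$'s color, we move $v_1$ to a color avoiding both current neighbor colors and the next target color of the neighbors.

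The obstacle is the count. A naive argument gives up to $2h$ recolorings of $v_1$, far more than $h$. The fix is to choose $v_1$'s new color to avoid the next several upcoming target colors of its neighbors. With $5$ colors there are $3$ allowed colors minus the upcoming conflicts, so we can typically survive two neighbor moves per $v_1$ move. We also need $v_1$ to reach $\beta'(v_1)$ at the end.

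This suggests a weaker induction that keeps slack. We would strengthen the hypothesis to require that, in the inductive transformation, consecutive recolorings of the two neighbors are interleaved so that $v_1$ needs at most about $h/2+O(1)$ moves. Alternatively, we could invoke the known bound of Theorem~\ref{tw2}(\ref{k23-5}) for $K_4$-minor-free graphs, namely $1386=2\cdot 693$. We would mimic the Bartier--Bousquet--Heinrich approach: recolor to a $4$-coloring and reverse, with prescribed behaviour on $v_n$. This step, ensuring $v_1$ stays within $h$ given the neighbors' $h$ moves and the fixed order on $v_n$, is the main difficulty. I expect it to require either a more refined potential argument or reuse of the $t=1386$ machinery together with the $q\le h-2$ slack.
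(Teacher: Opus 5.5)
\textbf{The $K_4$ case.} Here you follow the paper's idea: send the $v_i$ whose color $v_4$ needs to the unique free color, and rebalance by swapping roles. However, the $\lceil q/3\rceil+O(1)$ balance you aim for is unjustified, and it is also unnecessary. The paper uses a single swap. If some vertex, say $v_1$, would exceed $h-5$ forced recolorings, then $v_2,v_3$ have at most $4$ together. At $v_1$'s $\frac h2$-th forced recoloring, one instead moves $v_2$ to the free color and $v_1$ to $v_2$'s old color. This exchanges the future roles of $v_1,v_2$ at the cost of one extra move, so each is recolored at most $\frac h2+6\le h-5$ times. A final phase with at most two moves per vertex then gives the bound $h-3$.

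\textbf{The gap: the chordal outerplanar case.} Your proposal leaves this case open, and the induction you sketch cannot be completed as stated. Suppose the only inductive information about $H-v_1$ is that each vertex is recolored at most $h$ times. Then the two neighbors of $v_1$ may move $m=2h$ times in total. Even the best-choice insertion only guarantees $|S|_{v_1}|\le 1+\lceil (m-r)/2\rceil$, where $r$ is the number of saved steps; this is exactly your ``two neighbor moves per $v_1$ move'' heuristic. For $r=0$ it gives $h+1$, so the bound $h$ is not preserved. No cleverer color choice for $v_1$ alone repairs this. What you need is information about \emph{how} the neighbors are recolored, and a black-box induction hypothesis discards it.

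\textbf{The missing ingredient.} The missing tool is the Bartier--Bousquet--Heinrich analysis, which the paper adapts in its appendix. It goes as follows:
\begin{enumerate}
\item Take a perfect elimination ordering of $H$ ending at $v_n$; the paper checks that a chordal outerplanar graph has one ending at any prescribed vertex.
\item Fix $c_0(v_n),\ldots,c_q(v_n)$ and build the entire sequence by applying best choice to $v_{n-1},\ldots,v_1$.
\item Analyze the resulting sequence globally. Their Lemma~9 shows that a vertex recolored nearly the maximal number of times forces rigid structure on the sequences of its out-neighbors: the pattern $uwvu$ occurs at least $t-44$ times, and there are at most $94$ non-distinct consecutive color triples.
\item The argument of their Lemma~4 combines this structure with the saved-step estimate (their Lemma~6) to keep every count at most $h$.
\end{enumerate}
The hypothesis $q\le h-2$ enters precisely here. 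With $q\ge h-1$, the estimate would allow $h+1$ moves for an in-neighbor of $v_n$.

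\textbf{The fallback does not work.} Quoting Theorem~\ref{tw2} (part (2)) does not substitute for this analysis. Its bound $1386$ exceeds $h=692$, and it gives no control over a prescribed recoloring order on $v_n$.
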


\begin{proof}
We first consider the case that $H$ is $K_4$, so $n=4$. We perform the following operations prior to the recoloring of $v_4$ from $c_0(v_4)$ to $c_1(v_4)$. If $c_1(v_4)=c_0(v_i)$ for some $i\in \{1,2,3\}$, then recolor $v_i$ with a color in $\{1,2,\ldots,5\} \setminus \{\alpha'(v_1),\alpha'(v_2),\alpha'(v_3),c_0(v_4)\}$. Repeat above operations prior to each recoloring of $v_4$ from $c_{j}(v_4)$ to $c_{j+1}(v_4)$ for each $j\in \{1,\ldots,q-1\}$. The total number of recolorings of $v_1,v_2$, and $v_3$ is at most $q$. Since $q\leq h=692$, at most one vertex of $\{v_1,v_2,v_3\}$ can be recolored  more than $h-5$ times. If such a vertex exists, say $v_1$, then the total number of recolorings of $v_2$ and $v_3$ is at most $4$. We now revert the operations to the step just before the $\frac{h}{2}$-th recoloring of $v_1$, which would have been triggered by the upcoming recoloring of $v_4$ from $c_k(v_4)$ to $c_{k+1}(v_4)$ for some $k<q$. At this point, let the current colors of $v_1,v_2,v_3,v_4$ be $c'(v_1),c'(v_2),c'(v_3),c'(v_4)$, respectively, where $c'(v_1)=c_{k+1}(v_4)$ and $c'(v_4)=c_k(v_4)$. We then recolor $v_2$ with a color in $\{1,2,\ldots,5\} \setminus \{c'(v_1),c'(v_2),c'(v_3),c'(v_4)\}$,  $v_1$ with $c'(v_2)$, and $v_4$ with $c_{k+1}(v_4)$, in order. Note that in this modified sequence, the recolorings of $v_1$ and $v_2$ are swapped compared to the previous operations. Thus, each of $v_1$ and $v_2$ is recolored at most $\frac{h}{2}+6\leq h-5$ times.  Hence, we are reduced to the case that each vertex in $\{v_1,v_2,v_3\}$ is recolored at most $h-5$ times. 
Next, we only need to recolor $v_1,v_2,v_3$ to match $\beta|_{H-\{v_4\}}$, which can be done by recoloring each vertex at most twice. Let the current colors of $v_1,v_2,v_3,v_4$ be $c''(v_1),c''(v_2),c''(v_3),c''(v_4)$, respectively, where $c''(v_4)=\beta'(v_4)=c_q(v_4)$. Since $c''(v_1)\neq c''(v_2)\neq c''(v_3)$ and $\beta'(v_1)\neq \beta'(v_2)\neq \beta'(v_3)$, there exists a vertex $v_r$ such that $|\{c''(v_s), c''(v_t),\beta'(v_s), \beta'(v_t), \beta'(v_4)\}|\leq4$, where
$\{r,s,t\}= \{1,2,3\}$. We then recolor $v_r$ with a color $c^*(v_r)\in\{1,2,\ldots,5\}\setminus \{c''(v_s), c''(v_t),\beta'(v_s), \beta'(v_t), \beta'(v_4)\}$, $v_s$ with a color in $\{1,2,\ldots,5\}\setminus \{c^*(v_r),c''(v_t),\beta'(v_t), \beta'(v_4)\}$, $v_t$ with $\beta'(v_t)$, $v_s$ with $\beta'(v_s)$, and finally $v_r$ with $\beta'(v_r)$, in order.
Thus, we obtain a transformation from $\alpha'$ to $\beta'$, in which each vertex is recolored at most $h-3$ times and $v_4$ follows the prescribed coloring order.

The proof for the case that  
$H$ is a chordal outerplanar graph is provided in the Appendix, as it is a slight modification of the proof of Lemma 4 in \cite{Bartiertw22021}.
\end{proof}

\begin{proof}[\noindent{\bf Proof of Lemma~\ref{lem: chordal 5 to 5}.}]
 
We call a subgraph of $H$ a {\it part}, if it is a copy of $K_4$ or it is a maximal chordal outerplanar subgraph, which means that if two chordal outerplanar blocks share a cut vertex of $H$, then they belong to the same part.
We proceed by induction on the number $p$ of parts in $H$.
If $p=1$, then $H$ is a copy of $K_4$ or a chordal outerplanar graph. By Lemma \ref{Lemma-chordal-5to5}, we get the desired transformation.
So we may assume $p\geq 2$. Since $H$ is a $K_{2,3}$-minor-free chordal graph, there exists a part $H_1$ of $H$ such that $H_1$ and the remaining subgraph $H_2=H-(V(H_1)\setminus\{u\})$ share exactly one common vertex $u$, which is a cut vertex of $H$. By induction hypothesis on $H_2$, there is a transformation from $\alpha'|_{H_2}$ to $\beta'|_{H_2}$ by recoloring each vertex at most $h$ times,
where $h = 692$. In particular, if $H_1$ is a chordal outerplanar subgraph, then $u$ must be in a copy of $K_4$ but not in a block that is an outerplanar subgraph in $H_2$, and thus $u$ is recolored at most $h-3$ times in $H_2$.
Therefore, whether $H_1$ is a chordal outerplanar subgraph or a copy of $K_4$, we can apply Lemma \ref{Lemma-chordal-5to5} to obtain the desired transformation on $H$ from $\alpha'$ to $\beta'$.
\end{proof}

Summarizing the entire transformation, we shall illustrate by showing the following algorithm.

\begin{algorithm}[htbp]
    \caption{Recolor a $K_{2,3}$-minor-free graph between any two $5$-colorings}
    \begin{algorithmic}%[1] % [1] 表示行号从1开始
\STATE {\bf Input}: A $K_{2,3}$-minor-free graph $G$, two $5$-colorings $\alpha$ and $\beta$ of $G$.

\STATE {\bf Output}: A sequence of recolorings.
\FOR{each step in stage $1$}
\STATE Transform $\alpha$ to $\alpha_1$, transform $\beta$ to $\beta_1$.
\ENDFOR
\STATE Split $G$ into an independent set $I$ and a union of cactus graphs $G-I$.
\FOR{each step in stage $2$}
\STATE Recolor $I$ to the color $5$.
\ENDFOR
\FOR{each step in stage $3$}
\STATE Transform $\alpha_1|_{G-I}$ to $\beta_1|_{G-I}$.
\ENDFOR 
    \end{algorithmic}
\end{algorithm}

\subsection*{Acknowledgements}
%\noindent {\bf Acknowledgements}

\noindent The authors thank 
 Yongtang Shi for many helpful comments and suggestions, which have greatly improved the results in this paper. 
 Ruijuan Gu was partially supported by the National Natural Science Foundation of China (No. 12301460). 
 Hui Lei was partially supported by the National
Natural Science Foundation of China (Nos. 12371351, 12431013).

\newpage
\appendix
\renewcommand{\thesection}{\Alph{section}} % 设置 section 为 A、B、C...
\newtheorem{lemA}{Lemma}[section]          % 新的定理环境，依赖 section
\renewcommand{\thelemA}{\thesection.\arabic{lemA}} % 设置编号格式为 A.1, A.2

\section{Appendix}
%\section{Proof of Claim~\ref{clm: chordal outerplanar fix point h->h}}

To complete the proof of Lemma~\ref{Lemma-chordal-5to5}(2), we require an additional tool introduced in \cite{Bartiertw22021}. While their results  focus on the graphs of treewidth $2$,  we will extend these results to $K_{2,3}$-minor-free graphs with a similar argument to that used in \cite{Bartiertw22021}. 

Before presenting results, we introduce some  definitions, most of which are adapted from \cite{Bartiertw22021}. 
Let $G$ be a graph and assume that $\alpha$ and $\beta$ are two $\ell$-colorings of $G$. If $G$ can be recolored from $\alpha$ to $\beta$ by recoloring each vertex at most $h$ times, then we define a {\it recoloring sequence} $S$ as a sequence $s_1s_2\cdots s_t$, where each $s_i$ is a pair $(u_i,q_i)$.
Here, $u_i$ denotes the vertex being recolored, and $q_i$ denotes the new color assigned to $u_i$ in the $i$-th step. Note that all intermediate colorings defined by $S$ are proper colorings. For any subset $X\subseteq V(G)$, the sequence $S|_X$ represents the subsequence of $S$ restricted to recoloring vertices in a subset $X$. The sequence $S|_{v}$ represents the sequence of colors assigned to a vertex $v$ in $S$. Denoted by $|S|_{v}|$ the total number of times $v$ is recolored.

Next, we introduce how to extend a recoloring sequence by recoloring only one vertex. Assume that $S'$ is a recoloring sequence from $\alpha|_{G-v}$ to $\beta|_{G-v}$. 
Let $t_1, t_2, \ldots, t_p$ be all the steps in $S'$ where a neighbor of $v$ is recolored, and let $q_{t_i}$ denote the new color assigned at step $t_i$. A {\it valid color set} $C_{t_i}(v)\subseteq \{1,2,\ldots,\ell\}$ for $v$ at step $t_i$ is the set of colors that not used on the neighbors of $v$ after step $t_{i-1}$ and before step $t_i$. If $C_{t_i}(v)\neq \emptyset$, then a {\it best choice} of color for $v$ at step $t_i$ is determined as follows, in order of priority:

\begin{itemize}
    \item[(1)] If $\beta(v)\in C_{t_i}(v)$ and $\beta(v)\notin \{q_{t_j}|i\leq j\leq p\}$, then choose $\beta(v)$;
    \item[(2)] Otherwise, if there exists a color $q\in C_{t_i}(v)$ such that $q\notin \{q_{t_j}|i\leq j\leq p\}$, then choose such a color $q$;
    \item[(3)] Otherwise, choose a color $q\in C_{t_i}(v)$ for which the earliest index $j\geq i$ satisfying $q_{t_j}=q$ is maximized.
\end{itemize}
Now, we construct a recoloring sequence on $G$ from the recoloring sequence $S'$ on $G-v$ by applying the best choice strategy for $v$. Specifically, whenever a neighbor $v'$ of $v$ is recolored in $S'$
to the current color of $v$, we insert a recoloring step for $v$ immediately before this step of $v'$ in $S'$, assigning $v$ a best choice. Other than  these inserted steps, $v$ is not recolored at any other point, except possibly at the final step to ensure it is assigned a valid color if needed.

Let $G^*$ be a chordal graph with clique number at most $3$, and $v_1,v_2,\ldots,v_n$ be a perfect elimination ordering of $G^*$, where $v_i$ has at most $2$ neighbors in $\{v_{i+1},\ldots,v_n\}$ and these at most $3$ vertices form a clique. Based on this ordering, we construct the digraph $D(G^*)$ from $G^*$ by directing an arc from $v_i$ to $v_j$ whenever $v_i$ and $v_j$ are adjacent and $j>i$. 
Assume that $S^*=s_1s_2\cdots s_t$ is a recoloring sequence on $G^*$. For a vertex $v\in V(G^*)$, the step $i$ of the sequence $S^*$ is called {\it saved} for $v$ if $s_i$ recolors a vertex $w\in N_{D(G^*)}^+(v)$ and one of the following
holds:

$\bullet$ $v$ is not recolored at steps $1,2,\ldots,i$;

$\bullet$ $v$ is not recolored at steps $i,i+1,\ldots,t$;

$\bullet$ the two steps preceding $s_i$ in $S^*|_{N_{D(G^*)}^+[v]}$ do not recolor $v$.

\medskip

By repeatedly applying the best choice strategy to the vertices $v_{n},\ldots,v_1$ in order, Bartier et al. \cite{Bartiertw22021} constructed a special recoloring sequence in which $v_n$ is recolored at most once, from $(v_n,\alpha(v_n))$ to $(v_n,\beta(v_n))$. Based on this sequence, they established several useful lemmas (Lemmas 6 and 9 in \cite{Bartiertw22021}) that are instrumental in proving Theorem~\ref{tw2}. 
We note that the estimate in Lemma 6 of \cite{Bartiertw22021} should be understood in the ceiling form,
$|S|_v| \le 1+\left\lceil \frac{m-r}{2}\right\rceil$, as indicated by its proof. Recomputing the subsequent estimates with this form slightly changes the constants appearing later in their argument: the constants 34, 74, and 542 become 44, 94, and 692, respectively.

In our application, we need a slightly more general version in which the recoloring sequence of $v_n$ is prescribed in advance as $c_0(v_n)=\alpha(v_n), c_1(v_n),\ldots,c_q(v_n)=\beta(v_n)$, and the best choice strategy is then applied successively to the vertices $v_{n-1},\ldots,v_1$. The same arguments as those in \cite{Bartiertw22021} apply in this setting, since the proofs only use the recoloring sequence of $v_n$ as a fixed input sequence. For completeness and to make the subsequent application clear, we restate below the forms of the lemmas that we need.

% We observe that, using a similar argument, these lemmas still hold if we first fix the recoloring sequence of $v_n$ as $c_0(v_n)=\alpha(v_n),c_1(v_n),\ldots,c_q(v_n)=\beta(v_n)$, and then apply the best choice strategy to the remaining vertices $v_{n-1},\ldots,v_1$ in order.

\begin{lemA}[Lemma 6 of \cite{Bartiertw22021}]\label{lem6}
Let $G^*$ be a chordal graph with clique number at most $3$, and $v_1,v_2,\ldots,v_n$ be a perfect elimination ordering of $G^*$. Let $S^*$ be a recoloring sequence between any two $5$-colorings $\alpha$ and $\beta$ of $G^*$, constructed by first fixing the recoloring sequence of $v_n$ as $c_0(v_n)=\alpha(v_n), c_1(v_n),\ldots,c_q(v_n)=\beta(v_n)$, and then successively applying
the best choice strategy to the remaining vertices $v_{n-1},\ldots,v_1$. Then $|S^*|_v|\leq 1+\lceil {\frac{m-r}{2}}\rceil$ for any vertex $v\in V(G^*)\setminus\{v_n\}$, where $m=\sum_{w\in N^{+}(v)}|S^*|_w|$ and $r$ is the number of the save steps for $v$. 
\end{lemA}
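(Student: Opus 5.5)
The plan is to bound the number of recolorings of a fixed vertex $v\neq v_n$ by charging each of its recolorings, except the last one, to recolorings of its out-neighbours in $D(G^*)$. Throughout, $v$ is processed by the best choice strategy after every vertex of $N^+(v)$ has already been fixed. Since $v_1,\ldots,v_n$ is a perfect elimination ordering and $\omega(G^*)\le 3$, we have $|N^+(v)|\le 2$, and $N^+[v]$ is a clique. Hence at any moment the neighbours in $N^+(v)$ occupy at most $2$ of the $5$ colours.

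The valid colour set $C_{t_i}(v)$ is taken with respect to all neighbours. In-neighbours of $v$, however, are processed later and adapt to $v$, so they never force $v$ to move. Consequently every recoloring of $v$, apart from possibly a final one, is inserted immediately before some step $t_i$ in which an out-neighbour $w$ is recolored to the current colour of $v$. Enumerate the recolorings of $v$ as $x_1,\ldots,x_k$, where $x_k$ may be the final correction. It then suffices to show $k-1\le \lceil (m-r)/2\rceil$. Equivalently, we show that between consecutive forced recolorings of $v$ there are, on average, at least two recolorings of out-neighbours that are not saved steps. The key claim is the following: if $v$ is recolored at the step before $t_i$, then the next step of $S^*|_{N^+[v]}$ that forces $v$ again is preceded by at least two out-neighbour steps since $x_j$. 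Moreover, the out-neighbour step $t_i$ triggering $x_j$ is counted only once.

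To prove the claim, look at the moment $v$ picks a best choice at $t_i$. At most $2$ colours are currently on $N^+(v)$, and one further colour is the incoming $q_{t_i}$, which equals the old colour of $v$ and is not in $C$. This leaves at least $2$ valid colours. Rule (3) then picks the colour whose next appearance among $q_{t_j}$, $j\ge i$, is latest. The next out-neighbour step $t_{i+1}$ recolors a single vertex to a single colour, so at least one of the two candidate colours is not $q_{t_{i+1}}$. Hence the next forcing step is at least $t_{i+2}$. This gives the factor $1/2$: each forced recoloring of $v$ consumes at least two out-neighbour steps.

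Next, account for the saved steps. Saved steps of the first two types are out-neighbour steps before the first or after the last recoloring of $v$. Those of the third type are steps where the two preceding $N^+[v]$-steps are not $v$-recolorings. None of these is used in the pairing above: the pairing only uses, for each $x_j$, the step $t_i$ right after $x_j$ and the one following it. So each saved step is an out-neighbour step outside the $2$-per-recoloring charge. Then $m\ge 2(k-1)+r$ up to parity, which yields $|S^*|_v|=k\le 1+\lceil (m-r)/2\rceil$. The ceiling absorbs the last recoloring when it is charged to only one remaining step.

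The main obstacle is making the third saved-step condition mesh exactly with the pairing. We must verify that a step whose two predecessors in $S^*|_{N^+[v]}$ are both non-$v$ is never one of the two steps charged to a recoloring of $v$. This needs a careful case analysis on whether the second charged step itself recolors $v$ again. The first thing to try is to define the charge as \emph{the first two out-neighbour steps after $x_j$} and check directly against the definition. The prescribed sequence of $v_n$ plays no role, since $v_n$ enters only as a fixed out-neighbour input, exactly as in \cite{Bartiertw22021}.
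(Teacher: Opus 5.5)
Your proposal is correct and follows essentially the argument the paper relies on: the proof of Lemma 6 of Bartier et al.\ with the prescribed sequence of $v_n$ treated as fixed input. With at most two out-neighbours and five colours, the best choice avoids both $q_{t_i}$ and $q_{t_{i+1}}$. The (at most two) first out-neighbour steps after each recoloring of $v$ other than the last are exactly the non-saved steps. The ceiling comes from a final correction preceded by a single such step, which is precisely why the paper insists on the ceiling form.

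The step you flag as the main obstacle is immediate. Both charged steps have $x_j$ among their two predecessors in $S^*|_{N^+[v]}$ and lie strictly between $x_j$ and $x_{j+1}$. One small correction: $q_{t_i}$ does belong to $C_{t_i}(v)$ as defined, but rules (1)--(3) never select it when another valid colour exists.
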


\begin{lemA}[Lemma 9 of \cite{Bartiertw22021}]\label{lem9}
Let $G^*$ be a chordal graph with clique number at most $3$, and $v_1,v_2,\ldots,v_n$ be a perfect elimination ordering of $G^*$. Let $S^*$ be a recoloring sequence between any two $5$-colorings $\alpha$ and $\beta$ of $G^*$, constructed by first fixing the recoloring sequence of $v_n$ as $c_0(v_n)=\alpha(v_n), c_1(v_n),\ldots,c_q(v_n)=\beta(v_n)$, and then successively applying
the best choice strategy to the remaining vertices $v_{n-1},\ldots,v_1$.  Let $x$, $u$, $v$, $w$ be vertices of $G^*$ such that $N_{D(G^*)}^+(x)= \{u, v\}$ and $N_{D(G^*)}^+(u)= \{v, w\}$. Assume  $x = v_i$,  $t=max_{i'>i}|S^*|_{v_{i'}}|$. If $x$ is recolored at least $t-1$ times in $S^*$, then: 

\begin{enumerate}
    \item[(1)]{the pattern $uwvu$ appears at least $t-44$ times in $S^*|_{N^+[u]}$.}
    \item[(2)]{in the sequence of colors of $x$, there are at most $94$ indices where three consecutive colors are not pairwise distinct.}
\end{enumerate}
\end{lemA}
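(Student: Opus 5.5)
The plan is to follow the argument of Lemma~9 in \cite{Bartiertw22021} line by line, checking at each point that the only information about $v_n$ used is its (fixed) recoloring sequence. The best choice strategy for $v_{n-1},\dots,v_1$ only reads the sequences of the already-processed out-neighbours, and these are fixed inputs in either setting. So every local statement about $x$, $u$, $v$, $w$ goes through unchanged; only the constants must be recomputed with the ceiling form of Lemma~\ref{lem6}.

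\textbf{Step 1: counting via Lemma~\ref{lem6}.} Since $N^+(x)=\{u,v\}$ and both lie later in the ordering, we have $m_x=|S^*|_u|+|S^*|_v|\le 2t$. Lemma~\ref{lem6} then gives $t-1\le |S^*|_x|\le 1+\lceil (m_x-r_x)/2\rceil$. Hence $r_x$ (the number of saved steps for $x$) is bounded by a small constant, and $|S^*|_u|,|S^*|_v|\ge t-c$ for a small constant $c$. Because $u$ is recolored almost $t$ times, the same lemma applied to $u$ (with $N^+(u)=\{v,w\}$ and $m_u\le 2t$) bounds $r_u$ by a constant as well.

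\textbf{Step 2: structure from few saved steps.} By the third clause in the definition of a saved step, few saved steps for $x$ mean the following. Apart from $O(1)$ exceptional positions, between any two consecutive recolorings of $x$ there are exactly two recolorings in $N^+(x)$, and $x$ is recolored just before each pair. The analogous statement holds for $u$ with respect to $\{v,w\}$.

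\textbf{Step 3: local case analysis.} Next I would analyse windows of $S^*|_{N^+[u]}$ between consecutive non-exceptional recolorings of $u$. Since $x,u,v$ form a triangle and only $5$ colours are available, $x$ must avoid the current colours of $u$ and $v$ and follow priorities (1)--(3) of the best choice. Within such a window, the patterns $uvvu$, $uwwu$ and $uvwu$ force either an extra recoloring of $x$ or a saved step for $x$ or $u$. Each such occurrence consumes one unit of the $O(1)$ slack from Step~1, so all but a constant number of windows are of the form $uwvu$; tallying the constants gives $t-44$, which proves part~(1).

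For part~(2), a triple of consecutive colours of $x$ that are not pairwise distinct arises only when $x$ returns to a colour it just left. Inside a $uwvu$ window the best choice makes $x$ cycle through fresh colours, so such triples can only occur near the $O(1)$ bad windows or exceptional positions, each contributing a bounded number of indices. Summing gives the bound $94$.

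I expect Step~3 to be the main obstacle. This is the case analysis of the best choice rule within a window, and the bookkeeping needed to propagate the corrected ceiling constants ($34\to44$, $74\to94$) consistently through it. I would carry it out by directly reusing the case split of \cite{Bartiertw22021}, tracking each ``$+1$'' from the ceilings.
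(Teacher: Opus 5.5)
Your proposal takes the same route as the paper. The paper likewise gives no self-contained proof: it invokes the argument of Lemma~9 in \cite{Bartiertw22021}, notes that the recoloring sequence of $v_n$ enters only as a fixed input, and says that recomputing with the ceiling form of Lemma~\ref{lem6} turns the constants $34$ and $74$ into $44$ and $94$. Your Step~1 bookkeeping ($r_x\le 5$ and $|S^*|_u|,|S^*|_v|\ge t-5$) is correct, and Steps~2--3 defer to the original case split exactly as the paper does.
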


Combining Lemmas \ref{lem6} and \ref{lem9}, and using a similar argument as in the proof of Lemma 4 of \cite{Bartiertw22021}, we establish the following lemma.

\begin{lemA}[Lemma 4 of \cite{Bartiertw22021}]\label{LemmaA-3}
Let $h=692$, and let $G^*$ be a chordal outerplanar graph, and $v_1,v_2,\ldots,v_n$ be a perfect elimination ordering of $G^*$. Let $S^*$ be a recoloring sequence between any two $5$-colorings $\alpha$ and $\beta$ of $G^*$, constructed by first fixing the recoloring sequence of $v_n$ as $c_0(v_n)=\alpha(v_n), c_1(v_n),\ldots,c_q(v_n)=\beta(v_n)$, and then successively applying
the best choice strategy to the remaining vertices $v_{n-1},\ldots,v_1$. If $q\leq h-2$, then the recoloring sequence $S^*$ recolors each vertex at most $h$ times.
\end{lemA}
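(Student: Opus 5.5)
We prove the bound by induction along the reverse perfect elimination ordering, $i=n-1,n-2,\ldots,1$. The claim is that every $v_i$ is recolored at most $h$ times. The base is $v_n$ itself, which is recolored exactly $q\le h-2$ times by assumption; this slack of $2$ is what makes the induction work. We use that a chordal outerplanar graph has clique number at most $3$, so Lemmas~\ref{lem6} and~\ref{lem9} apply. Every $v_i$ with $i<n$ has $|N^+_{D(G^*)}(v_i)|\le 2$, and when this equals $2$ the two out-neighbours are adjacent. Fix $x=v_i$ and set $t=\max_{i'>i}|S^*|_{v_{i'}}|$; by induction $t\le h$.

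\textbf{Easy out-degrees.} If $|N^+(x)|\le 1$, then $m\le t$, and Lemma~\ref{lem6} gives $|S^*|_x|\le 1+\lceil t/2\rceil\le h$.

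\textbf{Out-degree $2$: setup.} Let $N^+(x)=\{u,v\}$ with $u$ before $v$ in the ordering. Lemma~\ref{lem6} gives $|S^*|_x|\le 1+\lceil (m-r)/2\rceil$ with $m\le 2t$, i.e.\ roughly $t+1$. So we must gain a small constant from saved steps or from the structure of $u$'s recolorings. Suppose for contradiction that $|S^*|_x|\ge t-1$ (otherwise we are done), and that $|S^*|_x>h$, which forces $t\ge h-1$.

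\textbf{Out-degree $2$: the case $u=v_{n-1}$, $v=v_n$.} Here $m\le |S^*|_u|+q$. Since $q\le h-2$ and $|S^*|_u|\le h$, we get $m\le 2h-2$, hence $|S^*|_x|\le 1+(h-1)=h$.

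\textbf{Out-degree $2$: general case.} Now $u$ has out-degree $2$, and we may write $N^+(u)=\{v,w\}$. Lemma~\ref{lem9}(1) says the pattern $uwvu$ occurs at least $t-44$ times in $S^*|_{N^+[u]}$. We then argue as in the proof of Lemma~4 of \cite{Bartiertw22021}.

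First, each occurrence of the pattern $uwvu$ forces structure on the colors of $u$. Second, the recolorings of $w$ between consecutive recolorings of $u$ produce saved steps for $x$, or else force $u$'s color sequence to repeat within three consecutive entries. Lemma~\ref{lem9}(2), applied to $u$ in place of $x$ (valid once we know $u$ is also recolored at least $t-1$ times), shows such repetitions happen at most $94$ times. This yields $r\ge c\,t-C$ for explicit constants, and plugging into Lemma~\ref{lem6} gives $|S^*|_x|\le t-\Omega(t)+O(1)$. For $t$ as large as $h=692$, this is below $t-1$, a contradiction.

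When $t$ is small, the trivial bound from Lemma~\ref{lem6}, $1+\lceil 2t/2\rceil=t+1$, may exceed $h$ only if $t\ge h$. Hence only $t\in\{h-1,h\}$ needs the counting above, and the constant $692$ is exactly the threshold at which the saved-step count beats the additive constants $44$ and $94$.

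\textbf{Main obstacle.} The hardest step is verifying that the saved-step counting of \cite{Bartiertw22021} is unaffected by replacing the best-choice recoloring of $v_n$ with the prescribed sequence $c_0(v_n),\ldots,c_q(v_n)$. This matters when $u$ or $v$ equals $v_n$ and $v_n$'s sequence may contain immediate repetitions. I would handle it by treating $v_n$'s $q\le h-2$ recolorings as an arbitrary input. The two extra units of slack absorb the at most two additional non-saved steps that an adversarial sequence for $v_n$ can cause, one at each end.
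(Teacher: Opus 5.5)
Your route is essentially the paper's. The paper also proceeds by downward induction along the elimination ordering via Lemma~\ref{lem6}, and handles the remaining out-degree-$2$ configurations by the argument of Lemma~4 of \cite{Bartiertw22021}, using Lemmas~\ref{lem6} and~\ref{lem9}. The hypothesis $q\le h-2$ is used only to keep in-neighbours of $v_n$ within $h$; the paper's Remark is exactly your computation $m\le h+(h-2)$.

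One correction: that computation is not specific to $u=v_{n-1}$. Whenever $v_n\in N^+(x)$, it is necessarily the later out-neighbour $v$. Then $m\le |S^*|_u|+q\le 2h-2$ and $|S^*|_x|\le h$ for every such $x$, and this is how all in-neighbours of $v_n$ are disposed of. You should do the same. Do not send the case $v=v_n$, $u\neq v_{n-1}$ into the general argument and rely on the claim that the two units of slack ``absorb two non-saved steps, one at each end''. That claim is unsupported, and it is not how the slack is used.

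Two minor points. Recolorings of $w\notin N^+(x)$ cannot themselves be saved steps for $x$. And $|S^*|_x|>h$ together with $|S^*|_x|\le t+1$ forces $t=h$, not merely $t\ge h-1$.
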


\noindent{\bf Remark.} 
In the current proof of Lemma \ref{LemmaA-3}, the upper bound $q\leq h-2$ is necessary, as the argument relies on applying Lemma \ref{lem6} to perform induction, which fails when $|S^*|_{v_n}|\geq h-1$. In this case, there may exist an in-neighbor $v$ of $v_n$ such that $|S^*|_v|=1+\lceil {\frac{m-r}{2}}\rceil\geq 1+\lceil {\frac{h+(h-1)-0}{2}}\rceil=h+1$.

%Now, we prove Lemma \ref{Lemma-chordal-5to5}(2) by applying Lemma \ref{LemmaA-3}.

\begin{proof}[\noindent{\bf Proof of Lemma~\ref{Lemma-chordal-5to5}(2) by applying Lemma \ref{LemmaA-3}.}] 
It suffices to show that for each vertex $u\in V(H)$, there exists a perfect elimination ordering of $H$ ending with $u$. Since $H$ is a chordal outerplanar graph, each block of $H$ contains at least two vertices of degree at most $2$, and for each such vertex, its closed neighborhood in the block form a clique. In particular, if $H$ has cut-vertices, then it has at least two end-blocks, each containing such a vertex  that is not a cut-vertex of $H$. We choose one of them, distinct from $u$, as $v_1$. Then $H-v_1$ remains a chordal outerplanar graph, and we repeat the argument to construct a perfect elimination ordering ending with $u$. This completes the proof.
\end{proof}

\end{document}